\newcounter{count}
\newtheorem{thm}[count]{Theorem}
\newtheorem{lem}[count]{Lemma}
\newtheorem{prop}[count]{Proposition}
\newtheorem{conj}[count]{Conjecture}
\newtheorem{cor}[count]{Corollary}
\theoremstyle{definition}
\newtheorem{defn}[count]{Definition}
\newtheorem{rem}[count]{Remark}
\newtheorem{quest}[count]{Question}
\begin{document}
\global\long\def\ZZ{\mathbb{Z}}
\global\long\def\S{\mathcal{S}}
\global\long\def\A{\mathcal{A}}
\global\long\def\G{\mathcal{G}}
\global\long\def\As#1#2#3{\A_{#1,#2,#3}}
\global\long\def\B{\mathcal{B}}
\global\long\def\N{\mathcal{N}}
\global\long\def\C{\mathcal{C}}


\title{EKR sets for large $n$ and $r$} 
\author{Ben Bond}
\maketitle

\begin{abstract}
Let $\A\subset\binom{[n]}{r}$ be a compressed, intersecting family
and let $X\subset[n]$. Let $\A(X)=\{A\in\A:A\cap X\ne\emptyset\}$
and $\S_{n,r}=\binom{[n]}{r}(\{1\})$. Motivated by the Erd\H{o}s-Ko-Rado
theorem, Borg asked for which $X\subset[2,n]$ do we have $|\A(X)|\le|\S_{n,r}(X)|$
for all compressed, intersecting families $\A$? We call $X$ that
satisfy this property \emph{EKR}. Borg classified EKR sets $X$ such
that $|X|\ge r$. Barber classified $X$, with $|X|\le r$, such that
$X$ is EKR for sufficiently large $n$, and asked how large $n$
must be. We prove $n$ is sufficiently large
when $n$ grows quadratically in $r$. In the case where $\A$ has
a maximal element, we are able to sharpen this bound to $n>\varphi^{2}r$
implies $|\A(X)|\le|\S_{n,r}(X)|$. We conclude by giving a generating
function that speeds up computation of $|\A(X)|$ in comparison with
the na\"{i}ve methods.
\end{abstract}

\section{Introduction \label{s.intro}}
The main objects of study in this paper are compressed, intersecting
families. We begin by defining these terms. Let $\binom{[n]}{r}$
denote the set of $r$ element subsets of $[n]=\{1,\ldots,n\}$. We
label elements of $\binom{[n]}{r}$ in increasing order i.e. for $B=\{b_{1},\ldots,b_{r}\}\in\binom{[n]}{r}$
we have $b_{i}<b_{i+1}$. A \emph{family} $\A$ is a subset $\A\subset\binom{[n]}{r}$.
We say $\A$ is \emph{intersecting} if $B,C\in\A$ implies $B\cap C\ne\emptyset$.
Notice that $\A$ is trivially intersecting if $n<2r$ by the pigeonhole
principle. This makes it possible to define a partial order known
as the \emph{compression} \emph{order} on $\binom{[n]}{r}$, as follows.
For $A=\{a_{1},\ldots,a_{r}\}$, and $B=\{b_{1},\ldots,b_{r}\}$,
we define $A\le B$ if $a_{i}\le b_{i}$ for all $1\le i\le r$. We
say a family $\A$ is \emph{compressed} if $A\in\A$ implies $B\in\A$
for $B\le A$. We extend this partial order to $2^{[n]}$: for $C=\{c_{1},\ldots,c_{k}\}$,
we say $A\prec C$ if $r\ge k$ and $a_{i}\le c_{i}$ for $1\le i\le k$.
For example, $\{1,2,3\}\prec\{1,2\}$ but $\{1,2\}\not\prec\{1,2,3\}$.

Let $\A$ be a family and $X\subset[n]$. One of the main objects
of study in this paper is $\A(X)$, which we define by $\A(X)=\{A\in\A:A\cap X\ne\emptyset\}$.
An important example of such a family is $\S_{n,r}$, defined by 
\[
\S_{n,r}=\binom{[n]}{r}(\{1\})=\{A\in\binom{[n]}{r}:1\in A\}.
\]
We will denote $\S_{n,r}$ by $\S$ if $n$ and $r$ are clear. It
is easy to check that $\S$ is compressed and intersecting.

The following theorem is one of the fundamental results about intersecting
families.

\begin{thm}(Erd\H{o}s-Ko-Rado)\label{ekr}\cite{ekr} (see also \cite{ff})
Let $n\ge2r$ and let $\A\subset\binom{[n]}{r}$ be an intersecting
family. Then $|\A|\le|\S|$. \end{thm}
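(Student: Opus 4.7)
The plan is to give Katona's cycle proof, which avoids heavy machinery and yields the bound through one clean double-counting argument. First I would set up pairs $(\sigma, A)$ where $\sigma$ is a cyclic ordering of $[n]$ and $A\in\A$ appears as an \emph{arc} of $\sigma$, i.e.\ $A$ is a block of $r$ cyclically consecutive elements under $\sigma$. There are $(n-1)!$ cyclic orderings, and for each fixed $A\in\binom{[n]}{r}$ the number of cyclic orderings in which $A$ appears as an arc is $r!\,(n-r)!$ (order $A$ as a block, then order the complement). So the total pair count equals $|\A|\cdot r!(n-r)!$.

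The crux is the following claim: for any single cyclic ordering $\sigma$, at most $r$ arcs of $\sigma$ belong to $\A$. To see this, fix one arc $A_0\in\A$ and ask which other arcs can meet $A_0$. Because $n\ge 2r$, an arc distinct from $A_0$ intersects $A_0$ exactly when its starting position is one of $2r-2$ specific positions (either strictly inside $A_0$, or shifted backward so the arc ends inside $A_0$). These $2r-2$ positions pair up naturally into $r-1$ pairs whose two arcs are disjoint; the intersecting property of $\A$ forces at most one arc per pair to lie in $\A$, and together with $A_0$ this yields at most $r$ arcs.

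Combining the two counts, the total number of $(\sigma,A)$ pairs is at most $r\cdot(n-1)!$, so
\[
|\A|\le\frac{r\cdot(n-1)!}{r!(n-r)!}=\binom{n-1}{r-1}=|\S|.
\]
The main obstacle is the pairing step: one needs to verify that the $2r-2$ relevant starting positions do decompose into $r-1$ disjoint-arc pairs, and this is precisely where the hypothesis $n\ge 2r$ is used (otherwise the would-be ``disjoint'' partners actually overlap, and the bound of $r$ per ordering fails). A plausible alternative, arguably closer to this paper's compression-oriented viewpoint, would be to apply shift operators $S_{ij}$ until $\A$ becomes compressed, and then induct on $n$ after splitting $\A$ according to whether $n$ is contained in a set; but the cycle method above is shorter and fits in a single page.
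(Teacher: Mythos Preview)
Your argument is correct: this is Katona's cycle proof, and you have stated the double count and the pairing lemma accurately. The only point that sometimes trips people up is the one you flagged yourself---that the two arcs in each of the $r-1$ pairs really are disjoint---and this is exactly where $n\ge 2r$ enters.

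There is nothing to compare against, however: the paper does not supply its own proof of Theorem~\ref{ekr}. The Erd\H{o}s--Ko--Rado theorem is quoted as a background result with citations to \cite{ekr} and \cite{ff}, and the paper moves on immediately to Borg's variant question. So your proposal is not an alternative to the paper's proof but rather a (perfectly valid) proof of a theorem the paper merely cites. If you wanted to align more closely with the paper's compression-oriented toolkit, the shifting proof you mention at the end would be the natural choice, but neither proof appears in the paper itself.
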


\noindent In \cite{borg}, Borg considered a variant of the Erd\H{o}s-Ko-Rado
theorem. Borg asked which sets $X\subset[2,n]$ have the property
that $|\A(X)|\le|\S_{n,r}(X)|$ for all compressed, intersecting families
$\A$. We call $X$ with this property \emph{EKR.} We assume $X\subset[2,n]$,
because if $1\in X$, then $\S(X)=\S$ and $X$ is trivially EKR by
the Erd\H{o}s-Ko-Rado theorem. There are many $X$ which are not
EKR. For example, consider the Hilton-Milner family $\N=\S([2,r+1])\cup\{[2,r+1]\}$
\cite{hm} . Then for $X=[2,r+1]$, we have $|\N(X)|=|\S(X)|+1$. 

The motivation for considering compressed families is twofold. Firstly,
the question is uninteresting without the requirement that $\A$ be
compressed, since for any $x\in X$, we have $\binom{[n]}{r}(\{x\})$
maximizes $|\A(X)|$ for intersecting families $\A$. Secondly, arbitrary
sets lack structure, and by imposing more conditions, we may gain
more information. In fact, compressed families and the shifting technique
(see \cite{frankl} for a survey) are powerful techniques in extremal
set theory and can be used to give a simple proof of the Erd\H{o}s-Ko-Rado
theorem.

In \cite{borg}, Borg classified $X$ that are EKR for $|X|\ge r$
and gave a partial solution in the case $|X|<r$. Barber continued
with Borg's work in \cite{barber} by considering $|X|\le r$. To
describe his results, we introduce the notion of \emph{eventually
EKR} sets, which are sets $X\subset[2,n]$ such that for fixed $r$,
we have that $X$ is EKR for sufficiently large $n$.

\begin{thm}(Barber)\label{t.barber} Let $r\ge3$, $n\ge2r$ and
$X\subset[2,n]$ with $|X|\le r$. If $X\not\subseteq[2,r+1]$, then
$X$ is eventually EKR if and only if one of the following holds

\begin{enumerate}
\item $|X|=1$
\item $|X|=2$ and $2,3\notin X$
\item $|X|=3$ and $\{2,3\}\not \subset X$
\item $|X|\ge 4.$
\end{enumerate}
\end{thm}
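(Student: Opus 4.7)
The proof has two directions. For the \emph{only if} direction I would exhibit a single family that serves as a counterexample in all three excluded configurations: the triangle family $\mathcal{F} = \{A \in \binom{[n]}{r} : |A \cap [3]| \ge 2\}$. This family is compressed (the condition $|A \cap [3]| \ge 2$ is preserved when entries are replaced by smaller ones) and intersecting (two $r$-sets each containing at least two elements of $[3]$ share an element of $[3]$ by pigeonhole). A direct count gives
\[
|\mathcal{F}(X)| - |\S(X)| = \binom{n-4}{r-3}
\]
for each failing configuration $X \in \{\{2,k\},\{3,k\},\{2,3,k\}\}$ with $k \ge r+2$, which is positive for all $n \ge r+1$, so these $X$ fail to be eventually EKR.

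For the \emph{if} direction the structural backbone is that $[2,r+1]$ is the unique minimum of the compression order on $r$-subsets of $[n]$ avoiding $1$. Hence any compressed, intersecting $\A$ either lies in $\S$ or contains $[2,r+1]$; in the latter case intersectivity forces $\A \cap \S \subseteq \S([2,r+1])$. I would then decompose
\[
|\A(X)| - |\S(X)| = |(\A \setminus \S)(X)| - |(\S \setminus \A)(X)|
\]
and assume $[2,r+1] \in \A$. Picking $x \in X$ with $x \ge r+2$ (available because $X \not\subseteq [2,r+1]$), every $\{1,x\} \cup T$ with $T \subseteq [r+2,n] \setminus \{x\}$ lies in $\S(X) \setminus \A$, yielding $|(\S \setminus \A)(X)| \ge \binom{n-r-2}{r-2} = \Theta(n^{r-2})$. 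It remains to prove $|(\A \setminus \S)(X)| = o(n^{r-2})$ under any of (1)--(4).

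This last estimate is the main obstacle, and it is sharp: the analogous bound fails for $\mathcal{F}$ in each excluded case, since $\mathcal{F} \setminus \S$ consists of all $r$-subsets containing $\{2,3\}$ and avoiding $1$, so $|(\mathcal{F} \setminus \S)(X)| = \binom{n-3}{r-2} = \Theta(n^{r-2})$ whenever $X$ meets $\{2,3\}$. The plan is to use the hypothesis on $X$ to pin an extra coordinate of each $B \in \A \setminus \S$. Such a $B$ avoids $1$, satisfies $b_i \ge i+1$, meets $X$, and must meet every $\{1\} \cup T \in \A \cap \S$; a covering-style argument (analogous to the one showing $\A \setminus \S = \{[2,r+1]\}$ when $\A \cap \S = \S([2,r+1])$) shows that meeting enough members of $\A \cap \S$ pins several low-index coordinates of $B$ into a bounded window. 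Under (1)--(4) I would argue that every feasible placement of $B \cap X$ leaves at least three coordinates so pinned, giving $|(\A \setminus \S)(X)| = O(n^{r-3})$; in the excluded configurations $B$ can meet $X$ via one coordinate in $\{2,3\}$ and a free ``high'' coordinate, leaving only $r-2$ positions constrained, which is exactly what the triangle family exploits. Reducing to maximal $\A$ by a monotonicity observation and executing the case analysis on which coordinates must be pinned constitute the bulk of the technical work.
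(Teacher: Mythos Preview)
First, note that this theorem is not proved in the present paper at all: it is quoted from Barber's paper \cite{barber} as background, and the paper builds on it rather than reproving it. So there is no ``paper's own proof'' to compare against here.

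Your \emph{only if} direction is correct. The triangle family is exactly $\A_{n,r,2}=F(r,n,\{\{2,3\}\})$ in the paper's notation, and the identity $|\A_{n,r,2}(X)|-|\S(X)|=\binom{n-4}{r-3}$ does hold in each of the three excluded configurations.

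Your \emph{if} direction, however, has a genuine gap at the key step. The claim that $|(\A\setminus\S)(X)|=O(n^{r-3})$ under conditions (1)--(4) is false. Take $\A=\A_{n,r,2}$ and $X=\{2,4,r+2\}$, which satisfies condition (3) since $\{2,3\}\not\subset X$. Every $B\in\A_{n,r,2}\setminus\S$ has $b_1=2$, $b_2=3$, so $B$ automatically meets $X$ at the element $2$; hence
\[
|(\A_{n,r,2}\setminus\S)(X)|=\binom{n-3}{r-2}=\Theta(n^{r-2}),
\]
not $O(n^{r-3})$. Your heuristic that ``every feasible placement of $B\cap X$ leaves at least three coordinates pinned'' breaks down precisely because $B\cap X$ can consist of a single element that is already one of the two pinned low coordinates, so nothing further gets pinned. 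Moreover, your lower bound $|(\S\setminus\A)(X)|\ge\binom{n-r-2}{r-2}$ has the same leading term $n^{r-2}/(r-2)!$ as the quantity above, so even a corrected $\Theta(n^{r-2})$ upper bound on the $\A$-side would not conclude the argument without comparing the next-order terms or strengthening the $\S$-side bound. For this particular $X$ one actually has $|(\S\setminus\A_{n,r,2})(X)|=\binom{n-4}{r-2}+\binom{n-5}{r-2}$, and it is the factor of $2$ in the leading coefficient that eventually wins; see the computation in Section~\ref{s.bigbad} of the paper, where exactly this pair $(\A_{n,r,2},\{2,4,r+2\})$ is analysed and the crossover is at $n\approx\varphi^2 r$. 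So the decomposition is sound, but the asymptotic bookkeeping must track leading coefficients of $n^{r-2}$, and the case analysis is more delicate than ``pin three coordinates''.
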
\noindent Barber asked which $n$ are sufficiently large to imply
$X$ is EKR. This paper provides bounds on $n$. 

Based on numerical results for small $n$ and $r$ stated in \cite{barber},
Barber speculated that $n\ge2r+2$ was sufficient to imply $X$ is
EKR. However, as will be seen in Section \ref{s.bigbad}, this bound
does not hold in general. We replace the suggested bound of $n\ge2r+2$
with the following conjecture, which is supported by computer evidence
for $r\le5$.

\begin{conj}\label{c.phicon} Let $r\ge2$ and $X\subset[2,n]$ be
eventually EKR. Then $n>\varphi^{2}r$ implies $X$ is EKR, where
$\varphi=\frac{1+\sqrt{5}}{2}$. \end{conj}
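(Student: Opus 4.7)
The plan is to promote the bound $n > \varphi^{2}r$ from the special case where $\A$ has a unique maximum element (already established in this paper) to the conjectured general statement. The strategy is to construct an injection whose existence is controlled by the Fibonacci-type growth encoded in the relation $\varphi^{2}=\varphi+1$.

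First, I would rewrite the target inequality. Decompose $\A=\A_{1}\sqcup\A_{0}$, where $\A_{1}=\{A\in\A:1\in A\}\subseteq\S$ and $\A_{0}=\A\setminus\A_{1}$. Since $\A_{1}(X)\subseteq\S(X)$, the inequality $|\A(X)|\le|\S(X)|$ is equivalent to
\[
|\A_{0}(X)|\le\bigl|\{B\in\S:B\cap X\ne\emptyset,\ B\notin\A\}\bigr|.
\]
So it suffices to inject $\A_{0}(X)$ into the sets $B$ that contain $1$, meet $X$, and lie outside $\A$. The obvious map that swaps the minimum of $A$ for $1$ fails: compressedness forces the image into $\A_{1}$, on the wrong side of the inequality. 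A workable injection must therefore route around $\A_{1}$ entirely.

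Second, I would exploit the antichain of maximal elements $M_{1},\ldots,M_{k}$ of $\A$, so that $\A$ is the union of the principal down-sets of the $M_{i}$. When $k=1$ the paper's argument already concludes, with the extremal case governed by the recursion $\varphi^{2}=\varphi+1$. For $k\ge 2$, I would attempt induction on $k$: given two incomparable maxima $M_{i},M_{j}$, replace them by their coordinatewise join $M_{i}\vee M_{j}$, strictly decreasing the number of maximal elements. Tracking the change in $|\A(X)|-|\S(X)|$ under this replacement should yield the desired monotonicity, reducing the problem to the principal case.

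The main obstacle is preserving the intersecting condition through the induction: adjoining $M_{i}\vee M_{j}$ typically adds sets disjoint from some $A$ already in $\A$, so a more delicate surgery is needed. A promising workaround is to classify directly the extremal non-principal configurations---these should be precisely the families tight at $n=\varphi^{2}r$ produced by the examples in the earlier section on bad examples---and to show that every other multi-maximum family is strictly dominated by $\S(X)$ once $n>\varphi^{2}r$. The computer evidence cited for $r\le 5$ supports such a classification, and confirming that the Fibonacci recursion genuinely controls the threshold in the non-principal regime appears to be the central technical step.
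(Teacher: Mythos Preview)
The statement you are attempting to prove is Conjecture~\ref{c.phicon}, and the paper does \emph{not} contain a proof of it. The paper establishes only the single-generator case (Theorem~\ref{t.onegen}) and, for arbitrary compressed intersecting families, a much weaker quadratic bound $n>(2+\varepsilon)r^{2}$ (Theorem~\ref{t.multgen}). The concluding remarks explicitly state that the methods of the paper do not extend to a proof of the conjecture, because explicit formulas for $|F(r,n,\G)(X)|$ become intractable when $\G$ has many generators. So there is no ``paper's own proof'' to compare against; you are proposing an argument for an open problem.

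Your sketch has two concrete gaps that you yourself flag but do not close. First, the induction step of replacing incomparable maxima $M_{i},M_{j}$ by $M_{i}\vee M_{j}$ generally destroys the intersecting property: the coordinatewise join can dominate a set disjoint from something already in $\A$, so the resulting family is no longer a legitimate competitor, and the inductive comparison with $|\S(X)|$ loses its meaning. You offer no repair for this. Second, the fallback of ``classifying directly the extremal non-principal configurations'' is not an argument but a restatement of the problem; moreover, the tight example in Section~\ref{s.bigbad} is the \emph{single-generator} family $F(r,n,\{\{2,3\}\})$, so the known extremal configurations are principal, not non-principal, and your heuristic that the Fibonacci recursion pins down the multi-generator threshold has no supporting evidence in the paper beyond the computer check for $r\le 5$. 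As written, the proposal is a plan with its central technical step missing, not a proof.
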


Note that using the discussion after Theorem 4 in \cite{barber},
it is easy to show the conjecture holds in the case $r=2$, so we
address the case $r\ge3$.

In order to describe our results towards Conjecture \ref{c.phicon},
we need the notion of generating sets. These were introduced by Ahlswede
and Khachatrian in \cite{ak}, and Barber considered a variant definition,
which is more useful in the present context. Let $\G\subset2^{[n]}$.
We define 
\[
F(r,n,\G)=\{A\in\binom{[n]}{r}:A\prec G\mbox{ for some }G\in\G\}.
\]

A \emph{maximal} compressed, intersecting family $\A$ is one that
is not properly contained in a compressed, intersecting family. Barber
proved in \cite{barber} that every maximal compressed, intersecting
$\A$ can be expressed as $F(r,n,\G)$ for some $\G$. Thus to show
that a given $X$ is EKR, it suffices to show $|\A(X)|\le|\S(X)|$
for intersecting families $\A$ of the form $\A=F(r,n,\G)$. Observe
that such families are naturally compressed. We may now state the
main theorems of this paper. 

\begin{thm}\label{t.multgen} Let $r\ge3$ and let $X$ be eventually
EKR with $|X|\le r$. For each $\varepsilon>0$, there exists an $r_{0}$
such that for $r>r_{0}$, the condition $n>(2+\varepsilon)r^{2}$
implies $X$ is EKR. Furthermore, 

\begin{enumerate}
\item If $|X|=1$ and $r\ge 12$, then $n>2r^2$ implies $X$ is EKR.
\item If $|X|=2$ and $r\ge 14$, then $n>2r^2$ implies $X$ is EKR.
\item If $|X|=3$ and $r\ge 14$, then $n>3r^2$ implies $X$ is EKR.
\item If $|X|=t\ge 4$ and $r\ge \max\{11,t\}$, then $n>tr^2$ implies $X$ is EKR.
\end{enumerate}
\end{thm}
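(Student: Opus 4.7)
The plan is to invoke Barber's reduction that every maximal compressed intersecting family has the form $\A = F(r,n,\G)$, and to prove the inequality $|\A(X)|\le |\S(X)|$ for each such $\A$ separately. If $\{1\}\in \G$ then $\S\subseteq \A$, and since $n\ge 2r$ any $r$-set not containing $1$ is disjoint from some $r$-set containing $1$, forcing $\A=\S$ (and the inequality is an equality). So we may assume $\{1\}\notin \G$. Writing $\A^\ast=\{A\in\A:1\notin A\}$ and decomposing both sides of $|\A(X)|\le |\S(X)|$ according to whether a set contains $1$, the desired inequality rearranges to
\[
|\A^\ast(X)|\le |\{A\in \S : A\cap X\neq \emptyset,\,A\notin \A\}|,
\]
so it suffices to upper-bound the left-hand side and lower-bound the right-hand side.

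For the upper bound, every $A\in \A^\ast$ satisfies $A\prec G$ for some $G\in \G$ with $1\notin G$; call this subcollection $\G_\ast$. The intersecting hypothesis constrains $\G_\ast$ strongly: each $G\in \G_\ast$ has $|G|\ge 2$, the largest element of any $G\in\G_\ast$ is bounded in terms of $r$, and $|\G_\ast|$ itself is bounded in terms of $r$. The main combinatorial step is an explicit estimate of the form $|\{A\in F(r,n,\{G\}) : 1\notin A,\, A\cap X\ne\emptyset\}|\le c(G,|X|)\binom{n-2}{r-2}$, with $c(G,t)$ polynomial in $r$ and $t$; summing over $\G_\ast$ yields an upper bound of order $r\cdot\binom{n-2}{r-2}$, with implicit constants depending on whether $|X|\in\{1,2,3\}$ or $|X|\ge 4$.

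For the lower bound, the eventually-EKR hypothesis together with Theorem~\ref{t.barber} guarantees either $|X|\ge 4$ or that $X$ contains an element $x\ge r+2$. Fixing such an $x$ and building $r$-sets of the form $\{1,x\}\cup T$ with $T$ a carefully chosen $(r-2)$-subset of $[r+2,n]\setminus X$ produces sets in $\S(X)$ that dominate every generator of $\G$ and hence lie outside $\A$. This yields a lower bound of order $\binom{n-|X|-r}{r-2}$, improved to roughly $|X|\cdot\binom{n-|X|-r}{r-2}$ when multiple elements of $X$ are available. Comparing the two bounds, a quadratic threshold of the form $n \gtrsim c\cdot r^2$ emerges; for $r$ large relative to $\varepsilon$ the constant $c$ can be taken as $2+\varepsilon$ uniformly in $|X|$, while for smaller $r$ the argument yields the case-dependent thresholds $2r^2$, $3r^2$, and $tr^2$.

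The main obstacle is achieving sharp enough constants to yield the stated thresholds, including the small-$r$ starting points $r\ge 11, 12, 14$. This requires careful case analysis: the cases $|X|=1$ and $|X|=2$ are hardest because the lower-bound side is smallest, forcing tighter control of the upper bound on $|\A^\ast(X)|$. In particular, generators of small cardinality (such as $|G|=2$) produce the largest sub-families $F(r,n,\{G\})$ and must be handled using the extra restrictions that the intersecting property imposes on the other generators.
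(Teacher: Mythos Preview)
Your decomposition $|\A^\ast(X)|\le |\S(X)\setminus\A|$ is valid, and the lower-bound construction $\{1\}\cup T$ with $T\subseteq[r+2,n]$ can be made to work (once you verify, as in the paper's Proposition~\ref{l.nicegens}, that every generator $G$ with $g_1=1$ has $g_2\le r+1$, and every $G$ with $g_1\ge 2$ has $g_2\le r+1$ as well). But the upper-bound side has a genuine gap.

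You assert that ``the largest element of any $G\in\G_\ast$ is bounded in terms of $r$, and $|\G_\ast|$ itself is bounded in terms of $r$,'' and then obtain an upper bound of order $r\cdot\binom{n-2}{r-2}$ by summing over $\G_\ast$. Neither claim is justified. Taking generators as $r$-sets (which one may), each $G\in\G_\ast$ satisfies $G\prec[s,2s-1]$ for some $s\ge 2$, which constrains only $g_1,\ldots,g_s$; the remaining coordinates $g_{s+1},\ldots,g_r$ can range up to $n$. So the generators are not contained in $[2r]$, and the antichain of maximal elements of a compressed intersecting family need not have size $O(r)$. Even if you had a bound of the form $|\G_\ast|\le C(r)$, summing $|F(r,n,\{G\})^\ast(X)|$ over $\G_\ast$ double-counts heavily, and a crude bound of order $r\cdot\binom{n-2}{r-2}$ on $|\A^\ast(X)|$ would not beat your lower bound of order $t\cdot\binom{n-t-r}{r-2}$ at the scale $n\sim r^2$: the ratio $\binom{n-t-r}{r-2}/\binom{n-2}{r-2}$ is bounded, so you would need $r\lesssim t$, which fails.

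The paper avoids this entirely. Rather than analyzing an arbitrary $\G$, it proves (Proposition~\ref{l.nicegens}) that every compressed intersecting $\A$ not contained in $\S$ or $\A_{n,r,2}$ is contained in the single explicit family
\[
\B=F(r,n,\{\{1,r+1\}\})\cup F(r,n,\{\{2,3,r+2\}\})\cup\bigcup_{s=3}^{r}\A_{n,r,s},
\]
and then bounds $|\B(X)|/|\S(X)|$ directly using the counting formulas and monotonicity lemmas of Section~\ref{s.onegenprelims}. The key point is that the leading contribution to $|\B(X)|/\binom{n-2}{r-2}$ is $r/a+O(r/a^2)$ (from the $\C_1$ and $\C_2$ pieces), not $r$; with $a=n/r=cr$ this becomes $1/c+o(1)$, which is what produces the quadratic threshold. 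The specific constants $2r^2$, $3r^2$, $tr^2$ and the starting points $r\ge 11,12,14$ then come from explicit evaluation of the resulting inequalities, using the $D(n,r,s,t)$ and $E(n,r)$ estimates from Lemmas~\ref{l.goestozero} and~\ref{l.ynofactor}. Your sketch contains no mechanism for reaching these constants.
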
In the case of a single generator, we have a sharper bound, which
provides evidence for Conjecture \ref{c.phicon}.

\begin{thm}\label{t.onegen} Let $r\ge3$, let $\A=F(r,n,\G)$ be
an intersecting family with $|\G|=1$, and let $X$ be eventually
EKR. Then $n>\varphi^{2}r$ implies $|\A(X)|\le|\S(X)|$. \end{thm}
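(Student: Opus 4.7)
The plan is to exploit the explicit combinatorial structure of single-generator families. Write $G=\{g_{1}<\dots<g_{k}\}$, so that $\A=F(r,n,\{G\})$ consists of the $r$-sets $\{a_{1}<\dots<a_{r}\}$ with $a_{i}\le g_{i}$ for $i\le k$. The first step is to show $\A$ is intersecting if and only if some $g_{i}\le 2i-1$: the ``only if'' direction exhibits the disjoint pair $A=\{1,3,\dots,2r-1\}$ and $B=\{2,4,\dots,2r\}$ when $g_{i}\ge 2i$ for all $i\le k$; the ``if'' direction uses a pigeonhole argument that any two disjoint $A,B\prec G$ would contribute $2i$ distinct elements to $[1,g_{i}]$, impossible when $g_{i}\le 2i-1$.

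Next I reduce to a single extremal generator. If $g_{1}=1$, every $A\in\A$ contains $1$, so $\A\subseteq\S$ and the inequality is automatic. Otherwise $g_{1}=2$, and by monotonicity of $F$ in its generator, the principal case is $G=\{2,3\}$, which saturates $g_{2}=3$ and leaves $a_{3},\dots,a_{r}$ unconstrained. Other saturating generators---those with $g_{2}>3$ and $g_{j}=2j-1$ at some later index $j$---yield $|\A|=O(n^{r-j})$, which is dominated by $|\S|=O(n^{r-1})$, so the inequality follows with room to spare.

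For $G=\{2,3\}$, I partition $\A$ by the prefix $(a_{1},a_{2})\in\{(1,2),(1,3),(2,3)\}$ to get $|\A|=\binom{n-2}{r-2}+2\binom{n-3}{r-2}$, and after repeated application of Pascal's identity, $|\S|-|\A|=\binom{n-3}{r-1}-\binom{n-3}{r-2}$. Splitting further by whether $2$ and $3$ lie in $X$, and using $|\S(X)|=\binom{n-1}{r-1}-\binom{n-1-|X|}{r-1}$, the tightest sub-case---$|X|=3$ with exactly one of $\{2,3\}$ in $X$, or $|X|=4$ with both $2,3\in X$---reduces to the two-term inequality $\binom{n-5}{r-2}\ge\binom{n-4}{r-3}$, equivalently $(n-r-1)(n-r-2)\ge(n-4)(r-2)$. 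Its leading-order form $(n-r)^{2}\ge nr$ yields the quadratic $(n/r)^{2}-3(n/r)+1\ge 0$, whose positive root is exactly $\varphi^{2}$. Hence $n>\varphi^{2}r$ suffices asymptotically, and a check of the lower-order terms of the quadratic $n^{2}-(3r+1)n+(r^{2}+7r-6)$ confirms that the bound remains valid for all $r\ge 3$.

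The main obstacle will be the sub-case analysis: verifying that every eventually EKR $X$, classified by membership of $2$ and $3$ and the value of $|X|$, produces an inequality no stronger than the critical two-term one above. This reduces to a finite list of cases via Barber's classification, in each of which the explicit binomial formulas collapse via Pascal's identity to a clean comparison dominated by the critical inequality; the appearance of the golden ratio is then pinned down by the single quadratic above.
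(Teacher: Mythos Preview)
Your analysis of the generator $G=\{2,3\}$ is correct and recovers exactly the quadratic $n^{2}-(3r+1)n+(r^{2}+7r-6)$ whose larger root is $\varphi^{2}r+o(r)$; this matches the paper's Section~\ref{s.bigbad} and the ``exceptional'' $s=2$ computations in the proof. The gap is in how you dispose of the remaining generators.

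Your intersecting characterisation shows that every single-generator intersecting family sits inside some $\A_{n,r,s}=F(r,n,\{[s,2s-1]\})$, but these families for $s\ge3$ are \emph{not} contained in $\A_{n,r,2}$ (for instance $\{3,4,5,\dots\}\in\A_{n,r,3}\setminus\A_{n,r,2}$), so there is no monotonicity reduction to $G=\{2,3\}$. You fall back on order of magnitude: ``$|\A|=O(n^{r-j})$ is dominated by $|\S|=O(n^{r-1})$''. First, the comparison must be with $|\S(X)|=\Theta(n^{r-2})$, not $|\S|$. Second, and decisively, the implicit constant in your $O(n^{r-j})$ depends on $\binom{2j-1}{j}$, which for $j$ near $r$ is of order $4^{r}/\sqrt{r}$; a big-$O$ bound valid as $n\to\infty$ with $r$ fixed merely reproves Barber's ``eventually EKR'' and says nothing about $n$ as small as $\varphi^{2}r$. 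Concretely, take $r=3$, $s=3$, $X=\{r+2\}=\{5\}$: then $\A_{n,3,3}=\binom{[5]}{3}$ and $|\A_{n,3,3}(X)|=\binom{4}{2}=6$, while $|\S_{n,3}(X)|=n-2$. The inequality $6\le n-2$ first holds at $n=8$, which is precisely the least integer exceeding $\varphi^{2}\cdot3\approx7.854$. So the $s\ge3$ regime is \emph{tight} at the threshold, not negligible.

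The paper devotes Sections~\ref{s.onegenprelims}--\ref{s.onegen} to exactly this issue. It splits $|\A_{n,r,s}(X)|$ into a main term $D(n,r,s,t)$ and a residual $Q(n,r,s,t)$, proves that $D$ is monotone decreasing in $s$ once $n/r>7/3$ (Lemma~\ref{l.ynofactor}, via summation by parts and a root analysis of the resulting polynomials in $a=n/r$), and shows $Q$ decays exponentially in $r$ (Lemma~\ref{l.goestozero}, via Stirling). This reduces the problem to explicit checks at $s\in\{2,3\}$ plus finite computer verification for small $r$. Your plan supplies the $s=2$ half but is missing any mechanism---monotonicity in $s$, or a substitute---to control $s\ge3$ uniformly down to $n=\lceil\varphi^{2}r\rceil$; that is where the real work lies, not in the sub-case analysis over $X$.
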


The outline of the paper is as follows. In Section \ref{s.bigbad}
we give an example of a family $\A$ and set $X$ showing that the
coefficient $\varphi^{2}$ in Conjecture \ref{c.phicon} cannot be
made any smaller. Section \ref{s.intcond} gives a necessary and sufficient
condition for a compressed family to be intersecting. This will be
useful in the sections that follow. Section \ref{s.onegenprelims}
establishes preliminaries necessary in the proof of Theorem \ref{t.onegen},
which appears in Section \ref{s.onegen}. We then use the results
from Sections \ref{s.onegenprelims} and \ref{s.onegen} to prove
Theorem \ref{t.multgen} in Section \ref{s.multgen}. Finally, in
Section \ref{s.genfun} we give a generating function that greatly
speeds up numerical computations for $|F(r,n,\G)(X)|$ in comparison
with the na\"{i}ve methods.

\section{A family for which $|\A(X)|>|\S(X)|$\label{s.bigbad}}

In this section, we exhibit a family that shows tightness of the coefficient
$\varphi^{2}$ in the bound $n>\varphi^{2}r$ of Conjecture \ref{c.phicon}.

\begin{prop}Let $X=\{2,4,r+2\}$, $\A=F(r,n,\{\{2,3\}\})$. For $r\ge4$,
we have $n<\frac{3r+1+\sqrt{5r^{2}-22r+25}}{2}$ implies $|\A(X)|>|\S(X)|$.

\end{prop}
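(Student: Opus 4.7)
The plan is to compute both sides of the inequality as explicit sums of binomial coefficients and then reduce $|\A(X)|>|\S(X)|$ to a quadratic in $n$.

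For $|\A(X)|$, I would first note that $A\in\A$ is equivalent to $a_1\le 2$ and $a_2\le 3$, so $\A$ partitions into three parts according to $\{a_1,a_2\}\in\{\{1,2\},\{1,3\},\{2,3\}\}$, of sizes $\binom{n-2}{r-2}$, $\binom{n-3}{r-2}$, and $\binom{n-3}{r-2}$ respectively. Every set in the first and third parts already contains $2\in X$ and so contributes in full to $\A(X)$. For the middle part, $\{1,3\}\subset A$ and $2\notin A$, so the remaining $r-2$ elements lie in $[4,n]$, and I would count those meeting $\{4,r+2\}$ by complementation as $\binom{n-3}{r-2}-\binom{n-5}{r-2}$. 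Adding these contributions gives
\[
|\A(X)|=\binom{n-2}{r-2}+2\binom{n-3}{r-2}-\binom{n-5}{r-2}.
\]

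For $|\S(X)|$, I would use $|\S(X)|=\binom{n-1}{r-1}-\binom{n-4}{r-1}$ and then apply Pascal's rule iteratively to expand this as $\binom{n-2}{r-2}+\binom{n-3}{r-2}+\binom{n-4}{r-2}$. Subtracting the two expressions, the $\binom{n-2}{r-2}$ terms cancel, and the identity $\binom{n-3}{r-2}-\binom{n-4}{r-2}=\binom{n-4}{r-3}$ collapses $|\A(X)|>|\S(X)|$ to the clean comparison
\[
\binom{n-4}{r-3}>\binom{n-5}{r-2}.
\]
The ratio of these two binomials equals $\frac{(n-4)(r-2)}{(n-r-1)(n-r-2)}$, so the inequality is equivalent to the quadratic $n^2-(3r+1)n+(r^2+7r-6)<0$, whose discriminant simplifies to $5r^2-22r+25$. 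The quadratic formula then yields exactly the stated upper bound on $n$.

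There is no substantive obstacle here; the one thing that requires care is organizing the three-way case split for $\A$ so that the resulting sums telescope into the comparison of two binomials. The hypothesis $r\ge 4$ enters only to guarantee that $X$ consists of three distinct elements and that the bound $\frac{3r+1+\sqrt{5r^2-22r+25}}{2}$ strictly exceeds $2r$, so that the proposition is non-vacuous.
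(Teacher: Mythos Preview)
Your proof is correct and follows the same overall strategy as the paper: compute $|\A(X)|$ by splitting on the first two elements, compute $|\S(X)|$, subtract, and reduce to a polynomial inequality in $n$. Your execution is in fact tidier than the paper's: by using complementation for the $\{1,3\}$ case and the telescoping expression $|\S(X)|=\binom{n-2}{r-2}+\binom{n-3}{r-2}+\binom{n-4}{r-2}$, you arrive directly at the single comparison $\binom{n-4}{r-3}>\binom{n-5}{r-2}$ and hence the quadratic $n^{2}-(3r+1)n+r^{2}+7r-6<0$, whereas the paper first obtains a cubic and must factor out $(n-r)$ to reach the same quadratic.
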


\begin{proof}

We begin by computing $|\A(X)|$ and $|\S(X)|$. To compute $|\A(X)|$,
notice that the first two elements of any $A\in\A$ must be $\{1,2\}$,
$\{1,3\}$, or $\{2,3\}$. There are $\binom{n-2}{r-2}$ elements
of $\A(X)$ with first two elements $\{1,2\}$ and $\binom{n-3}{r-2}$
elements of $\A(X)$ with first two elements $\{2,3\}$, since in
both cases such sets contain $2\in X$. We then use the principle
of inclusion-exclusion to count $2\binom{n-4}{r-3}-\binom{n-5}{r-4}$
elements that have first two elements $\{1,3\}$ and contain $4$
or $r+2$. Thus 
\[
|A(X)|=\binom{n-2}{r-2}+\binom{n-3}{r-2}+2\binom{n-4}{r-3}-\binom{n-5}{r-4}
\]

To count $|\S(X)|$, notice that this is just counting $(r-1)$ element
subsets of an $(n-1)$ element set (since $1$ is already accounted
for) that contain at least one of three distinguished elements. Thus
we may use the principle of inclusion-exclusion to get
\[
|\S(X)|=3\binom{n-2}{r-2}-3\binom{n-3}{r-3}+\binom{n-4}{r-4}
\]
To find when $|\A(X)|>|\S(X)|$, we compare the two expressions and
simplify to obtain the inequality
\begin{eqnarray*}
0>|\S(X)|-|\A(X)| & = & 3\binom{n-2}{r-2}-3\binom{n-3}{r-3}+\binom{n-4}{r-4}\\
 &  & -\binom{n-2}{r-2}-\binom{n-3}{r-2}-2\binom{n-4}{r-3}+\binom{n-5}{r-4}\\
 & = & \frac{(n-5)!}{(r-2)!(n-r)!}(2(n-2)(n-3)(n-4)-3(n-3)(n-4)(r-2)+\\
 &  & (n-4)(r-2)(r-3)-(n-3)(n-4)(n-r)-\\
 &  & 2(n-4)(r-2)(n-r)+(r-2)(r-3)(n-r)
\end{eqnarray*}
Multiplying by $(r-2)!(n-r)!/(n-5)!$ and expanding, we get
\[
0>n^{3}+(-4r-1)n^{2}+(4r^{2}+8r-6)n-r^{3}-7r^{2}+6r
\]
Notice that the right hand side is divisible by $n-r$. Factoring
gives
\[
0>(n-r)(n^{2}+(-3r-1)n+r^{2}+7r-6)
\]
or
$$n<\frac{3r+1+\sqrt{5r^{2}-22r+25}}{2},$$
as desired.\end{proof}

Notice that $\frac{3r+1+\sqrt{5r^{2}-22r+25}}{2}=\varphi^{2}r+o(r)$,
so the bound given in Conjecture \ref{c.phicon} is tight, up to lower
order terms.

\section{Conditions for a family to be intersecting \label{s.intcond}}

In this section, we determine necessary and sufficient conditions
for a family to be intersecting that will be useful later in the paper.
The purpose of this section is to prove the following proposition.

\begin{prop}\label{p.intcond} Let $\A$ be a compressed family. $\A$
is intersecting if and only if for any $A=\{a_{1},\ldots,a_{r}\}$,
$B=\{b_{1},\ldots b_{r}\}\in\A$ there exists a pair $i,j$, with
$1\le i,j\le r$ such that $i+j>\max\{a_{i},b_{j}\}$.\end{prop}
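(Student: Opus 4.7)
My plan is to prove both directions. The reverse (``if'') direction is a one-line pigeonhole argument and does not even use compressedness; the forward (``only if'') direction goes by contrapositive and uses Hall's marriage theorem to produce disjoint sets in $\A$.

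For the reverse direction, suppose the condition holds and fix $A, B \in \A$. Choosing $i, j$ with $i + j > \max(a_i, b_j)$, both prefixes $\{a_1, \ldots, a_i\}$ and $\{b_1, \ldots, b_j\}$ lie in $[1, i + j - 1]$; that is $i + j$ elements in a set of size $i + j - 1$, so by pigeonhole they share a common value, giving $A \cap B \ne \emptyset$.

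For the forward direction, I argue the contrapositive: suppose $A, B \in \A$ satisfy $i + j \le \max(a_i, b_j)$ for every $(i, j) \in [r]^2$, and I build $A' \le A$, $B' \le B$ with $A' \cap B' = \emptyset$; by compressedness $A', B' \in \A$, contradicting that $\A$ is intersecting. The main obstacle is the construction, which I would handle via Hall's marriage theorem applied to the bipartite graph on left vertices $\{A_1, \ldots, A_r, B_1, \ldots, B_r\}$ and right vertex set $[n]$, where $A_i$ is adjacent to $[1, a_i]$ and $B_j$ to $[1, b_j]$. For a left subset $S$ with $I = \{i : A_i \in S\}$ and $J = \{j : B_j \in S\}$, the neighborhood is the prefix $[1, M]$ where $M = \max(a_{\max I}, b_{\max J})$ by monotonicity of the sequences; when both $I, J$ are nonempty the hypothesis gives $M \ge \max I + \max J \ge |I| + |J| = |S|$, and the one-sided cases are immediate, so Hall's condition holds. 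The resulting perfect matching provides distinct values $\phi(A_i), \phi(B_j)$ with $\phi(A_i) \le a_i$ and $\phi(B_j) \le b_j$. Setting $A' = \{\phi(A_i) : i \in [r]\}$ and $B' = \{\phi(B_j) : j \in [r]\}$ produces disjoint $r$-element sets, and the relation $A' \le A$ follows from the observation that the $k$-th smallest element of $A'$ is at most $\max\{\phi(A_1), \ldots, \phi(A_k)\} \le a_k$; symmetrically $B' \le B$.
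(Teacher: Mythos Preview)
Your proof is correct. The pigeonhole argument for the ``if'' direction matches the paper's, and your Hall's-theorem construction for the contrapositive of the ``only if'' direction is clean and complete; the verification of Hall's condition and of $A'\le A$, $B'\le B$ is accurate.

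Your route differs from the paper's, however. The paper proves the proposition by assembling three lemmas: a characterization of when a single-generator family $F(r,n,\{A\})$ is intersecting (namely $A\prec[s,2s-1]$ for some $s$), a characterization of when a pair $A,B$ is \emph{cross intersecting} (every $C\le A$ meets every $D\le B$) via the same $(i,j)$ condition, and a lemma linking the two. Crucially, for the ``only if'' half of the cross-intersecting lemma the paper does not give a direct construction but instead invokes Proposition~8.1 of Frankl's survey on shifting. Your Hall's-theorem argument replaces that citation with an explicit, self-contained construction of the disjoint pair $A',B'$, which is a genuine gain in transparency. On the other hand, the paper's decomposition isolates the single-generator criterion $A\prec[s,2s-1]$ as a standalone lemma, and this is used repeatedly later (it underlies the reduction to the families $\A_{n,r,s}$ in the proof of Theorem~\ref{t.onegen}); your direct argument bypasses that statement, so if you adopt your proof you would still want to record that criterion separately.
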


Note that this is especially useful in the case where $\A=F(r,n,\G)$.
In this case assume that for some $A,B\in\A$, we have an $i,j$ such
that $i+j>\max\{a_{i},b_{j}\}$, then for any $C=\{c_{1},\ldots,c_{r}\}\le A$
and $D=\{d_{1},\ldots,d_{r}\}\le B$, we have $i+j>\max\{c_{i},d_{j}\}$,
thus it is sufficient to find such a pair $i,j$ for each pair of
generators.

Similar results may also be found in Section 8 of \cite{frankl}. In fact, the ``only if'' of Proposition \ref{p.intcond} follows from Proposition 8.1 of \cite{frankl}. However, Lemma \ref{l.intcond1} will be important in the proof of Theorem \ref{t.onegen}, so we present it separately, and only use Proposition 8.1 in Lemma \ref{l.intcond2}.

\begin{lem}\label{l.intcond1} Let $\A=F(r,n,\{A\})$, with $A=\{a_{1},\ldots,a_{r}\}$.
Then $\A$ is intersecting if and only if $A\prec[s,2s-1]$ for some
$s$, with $1\le s\le r$. \end{lem}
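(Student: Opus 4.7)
The plan is to apply Proposition \ref{p.intcond} to $\A = F(r,n,\{A\})$, using that $A$ is the unique $\prec$-maximum generator. By the remark following that proposition, to show $\A$ is intersecting it suffices to find indices $i,j$ with $i+j > \max\{a_i,a_j\}$ for the single pair of generators $(A,A)$. Conversely, since $A \prec A$ we have $A \in \A$, and applying Proposition \ref{p.intcond} with $B = C = A$ forces such a pair of indices to exist. Hence $\A$ is intersecting if and only if there exist $1 \le i,j \le r$ with $i + j > \max\{a_i,a_j\}$.

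The next step is to simplify this index condition. Since $a_1 < a_2 < \cdots < a_r$, we may assume without loss of generality that $i \le j$, in which case $\max\{a_i,a_j\} = a_j$ and the inequality $i+j > a_j$ is easiest to satisfy by taking $i = j$. Thus the condition collapses to the existence of some $s$ with $1 \le s \le r$ such that $a_s \le 2s - 1$.

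Finally, I would verify that this is equivalent to $A \prec [s,2s-1]$ for some $s$ with $1 \le s \le r$. The forward implication is immediate: by definition $A \prec [s,2s-1]$ requires $a_i \le s+i-1$ for $1 \le i \le s$, and specializing $i=s$ yields $a_s \le 2s-1$. For the converse, suppose $a_s \le 2s-1$. Since $a_1 < a_2 < \cdots < a_s$ are strictly increasing positive integers, $a_i \le a_s - (s - i) \le (2s-1) - (s-i) = s + i - 1$ for all $1 \le i \le s$, which is precisely $A \prec [s,2s-1]$.

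There is no real obstacle: once Proposition \ref{p.intcond} is in hand, the argument is essentially a bookkeeping reduction from an intersecting condition on the whole family to a single inequality on the unique maximum generator, followed by the observation that strict monotonicity of the $a_i$ automatically propagates the bound $a_s \le 2s-1$ backward to bounds on $a_1,\ldots,a_{s-1}$.
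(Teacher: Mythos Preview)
Your internal reductions are correct, but the overall argument is circular in this paper's logical structure. Proposition~\ref{p.intcond} is \emph{stated} before Lemma~\ref{l.intcond1} but \emph{proved} afterward, and its proof explicitly invokes Lemmas~\ref{l.intcond1}, \ref{l.intcond2}, and~\ref{l.intcond3}. So you cannot appeal to Proposition~\ref{p.intcond} here. The paper even flags this: it remarks that Lemma~\ref{l.intcond1} is needed independently later (in the proof of Theorem~\ref{t.onegen}), and therefore gives it a self-contained proof rather than deriving it from the general machinery.

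The paper's direct argument is short and avoids the circularity. For the ``if'' direction: if $A \prec [s,2s-1]$, then any $C \le A$ has $c_1,\ldots,c_s \in [1,2s-1]$, so for any two such sets $C,D$ the $2s$ elements $c_1,\ldots,c_s,d_1,\ldots,d_s$ lie in a set of size $2s-1$, and pigeonhole forces $C \cap D \ne \emptyset$. For the ``only if'' direction the paper argues the contrapositive: if $A \not\prec [s,2s-1]$ for every $s$, then (using precisely your equivalence, read the other way) $a_s \ge 2s$ for every $s$, whence both $\{1,3,\ldots,2r-1\}$ and $\{2,4,\ldots,2r\}$ lie below $A$ and furnish two disjoint members of $\A$.

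Your observation that $A \prec [s,2s-1]$ is equivalent to the single inequality $a_s \le 2s-1$ is exactly what drives both this proof and Lemma~\ref{l.intcond3}, so that part of your work carries over unchanged; you just need to replace the appeal to Proposition~\ref{p.intcond} with the pigeonhole argument and the explicit disjoint pair.
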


\begin{proof}

If $A\prec[s,2s-1]$, then $\A$ is intersecting by an easy application
of the pigeonhole principle. 

Suppose that $A\not\prec[s,2s-1]$ for all $s$. We claim $a_{i}\ge2i$
for $1\le i\le r$. To see this, note that if $A\not\prec[s,2s-1]$,
then there exists some $i$, with $1\le i\le s$ such that $a_{i}>s+i-1$.
Since $a_{i+1}>a_{i}$, this implies $a_{s}>2s-1$. Since this holds
for each $s\in[1,r]$, we have $a_{i}\ge2i$ for $1\le i\le r$. From
this, we see that $\{2,4,6,\ldots2r\}\le A$ and $\{1,3,5,\ldots,2r-1\}\le A$
are nonintersecting. Hence for $\A$ to be intersecting, we must have
$A\prec[s,2s-1]$ for some $s$. \end{proof}

The remaining lemmas are most easily stated with the following definition.

\begin{defn} Let $n\ge2r$, $A=\{a_{1},\ldots,a_{r}\},\, B=\{b_{1},\ldots,b_{r}\}$
be ordered sets in $\binom{[n]}{r}$. We say $A$ and $B$ are \emph{cross
intersecting} if $C\cap D\ne\emptyset$ for any $C\le A$, and $D\le B$.\end{defn}

\begin{lem}\label{l.intcond2} Sets $A,B\in\binom{[n]}{r}$ are cross
intersecting if and only if there exists a pair $i,j$, with $1\le i,j\le r$
such that $i+j>\max\{a_{i},b_{j}\}$. \end{lem}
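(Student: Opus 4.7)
The plan is to prove the two directions separately. The ``if'' direction is a clean pigeonhole argument: given $(i,j)$ with $i+j > \max\{a_i, b_j\}$, for any $C = \{c_1, \ldots, c_r\} \le A$ and $D = \{d_1, \ldots, d_r\} \le B$, we have $c_i \le a_i < i+j$ and $d_j \le b_j < i+j$, so $\{c_1, \ldots, c_i\}$ and $\{d_1, \ldots, d_j\}$ both sit inside $[1, i+j-1]$; an ambient set of $i+j-1$ elements cannot contain two disjoint subsets of total size $i+j$, so $C \cap D \ne \emptyset$.

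For ``only if'' I would argue the contrapositive: assuming $\max\{a_i, b_j\} \ge i+j$ for every $i, j \in [r]$, I would construct disjoint $C \le A$ and $D \le B$ via Hall's marriage theorem. Form a bipartite graph with left vertex set $\{\alpha_1, \ldots, \alpha_r\} \sqcup \{\beta_1, \ldots, \beta_r\}$ and right vertex set $[n]$, where $\alpha_i$ is adjacent to $[a_i]$ and $\beta_j$ is adjacent to $[b_j]$. A perfect matching from the left produces distinct positions $c'_i \le a_i$ and $d'_j \le b_j$. Sorting the $c'_i$ into increasing order $c_1 < \cdots < c_r$ still satisfies $c_i \le a_i$ by a standard rearrangement argument (if some sorted $c_i > a_i$, then the $r-i+1$ values $c_i, \ldots, c_r$ would each be matched to an $\alpha$ with index $> i$, but only $r-i$ such indices exist), and similarly for $D$; disjointness $C \cap D = \emptyset$ is inherited from the matching being injective.

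Checking Hall's condition is where the hypothesis enters. Let $S$ be a subset of left vertices containing $\alpha_{i_1}, \ldots, \alpha_{i_s}$ and $\beta_{j_1}, \ldots, \beta_{j_t}$ with the indices in increasing order. Then $N(S) = \bigl[\max\{a_{i_s}, b_{j_t}\}\bigr]$, with the obvious modification if $s=0$ or $t=0$. Since $i_s \ge s$ and $j_t \ge t$, monotonicity gives $\max\{a_{i_s}, b_{j_t}\} \ge \max\{a_s, b_t\} \ge s+t = |S|$, where the last inequality is the hypothesis applied at $(s, t)$; the boundary cases follow from $a_i \ge i$ and $b_j \ge j$.

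The main obstacle I anticipate is the rearrangement step, i.e.\ verifying that sorting the matched positions preserves the componentwise inequalities $c_i \le a_i$ (and $d_j \le b_j$) needed for $C \le A$ and $D \le B$; this is the only combinatorial content beyond Hall, since the hypothesis feeds directly into Hall's condition once the bipartite graph is set up correctly. Alternatively, one could shortcut this half of the proof by invoking Proposition 8.1 of \cite{frankl} as suggested in the text, but I find the Hall construction cleaner since it also yields the witnesses $C, D$ explicitly.
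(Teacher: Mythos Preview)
Your argument is correct on both directions. The ``if'' half is exactly the pigeonhole step the paper gives: $c_1,\ldots,c_i,d_1,\ldots,d_j$ are $i+j$ elements of $[1,i+j-1]$, so two must coincide.

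For the ``only if'' half you take a genuinely different route. The paper simply cites Proposition~8.1 of \cite{frankl}, reading off $i=|A\cap[1,\ell]|$ and $j=|B\cap[1,\ell]|$ from that statement. Your Hall's-theorem construction is self-contained and, as you note, produces the disjoint witnesses $C\le A$, $D\le B$ explicitly. The verification of Hall's condition is clean: for $S$ containing $\alpha$-indices up to $i_s$ and $\beta$-indices up to $j_t$, the neighbourhood is $[\max\{a_{i_s},b_{j_t}\}]$, and monotonicity plus the standing hypothesis $\max\{a_s,b_t\}\ge s+t$ gives $|N(S)|\ge s+t=|S|$; the degenerate cases $s=0$ or $t=0$ reduce to $b_j\ge j$ or $a_i\ge i$. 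The rearrangement step you flagged as the ``main obstacle'' is in fact fine: if the sorted $c_i>a_i$ then each of $c_i,\ldots,c_r$ arose as some $c'_k$ with $a_k\ge c'_k>a_i$, forcing $k>i$; this assigns $r-i+1$ values to only $r-i$ available indices. The trade-off is that the paper's citation is a one-liner while your argument is a short paragraph; on the other hand yours avoids the external dependence and makes the contrapositive tangible.
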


\begin{proof}

Let $C=\{c_{1},\ldots,c_{r}\}\le A$ and $D=\{d_{1},\ldots,d_{r}\}\le B$,
and assume there exists a pair $i,j$ with $i+j>a_{i},b_{j}$. Without
loss of generality, assume $a_{i}\le b_{j}$. Then $c_{1},\ldots,c_{i},d_{1},\ldots,d_{j}\le b_{j}$,
so we have $i+j>b_{j}$ elements that are less than or equal to $b_{j}$,
so two must be the same by the pigeonhole principle. $ $Thus in this
case, $A$ and $B$ are cross intersecting.

The opposite direction is a direct consequence of Proposition 8.1 of \cite{frankl} with $r=2$, $\mathcal{F}_1=F(r,n,\{A\})$, and $\mathcal{F}_2=F(r,n,\{B\})$. We simply take $i=|A\cap [1,\ell]|$, and $j=|B\cap[1,\ell]|$.
 \end{proof}

\begin{lem}\label{l.intcond3} Let $A=\{a_{1},\ldots,a_{r}\}$. There
exists an $s$ such that $A\prec[s,2s-1]$ if and only if the pair
$A,A$ is cross intersecting. \end{lem}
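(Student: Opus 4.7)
The strategy is to recognize that Lemma \ref{l.intcond3} is essentially a reformulation of Lemma \ref{l.intcond1}. The first step is to note that, for $B \in \binom{[n]}{r}$, the relation $B \prec A$ coincides with the componentwise order $B \le A$, since both $A$ and $B$ have size $r$. Consequently, $F(r,n,\{A\}) = \{B \in \binom{[n]}{r} : B \le A\}$, so the family $F(r,n,\{A\})$ is intersecting if and only if every pair $B,C \le A$ satisfies $B \cap C \ne \emptyset$. But this is precisely the statement that $(A,A)$ is cross intersecting. Applying Lemma \ref{l.intcond1} then yields the desired equivalence directly.

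For a self-contained argument that bypasses Lemma \ref{l.intcond1}, I would apply Lemma \ref{l.intcond2} with $B = A$, obtaining the characterization that $(A,A)$ is cross intersecting iff there exist $i,j$ with $i + j > \max\{a_i, a_j\}$. On the other side, since $a_{k+1} > a_k$ the sequence $a_k - k$ is weakly increasing, so the condition $A \prec [s,2s-1]$ (i.e.\ $a_k \le s + k - 1$ for $1 \le k \le s$) reduces to the single inequality $a_s \le 2s - 1$. One then reads off the equivalence: the forward direction takes $i = j = s$, giving $i + j = 2s > 2s-1 \ge a_s$; for the converse, assume WLOG $i \le j$, so $a_j = \max\{a_i,a_j\} < i + j \le 2j$, and hence $s = j$ satisfies $a_s \le 2s - 1$.

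The argument is essentially a bookkeeping exercise, and I do not expect any genuine obstacle. The only point worth emphasizing is the monotonicity of $a_k - k$, which collapses the seemingly multi-index condition in (1) to the single scalar condition $a_s \le 2s - 1$ and makes the alignment with the cross intersection criterion of Lemma \ref{l.intcond2} transparent.
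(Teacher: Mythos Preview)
Your self-contained argument in the second paragraph is essentially the paper's proof: both invoke Lemma~\ref{l.intcond2} with $B=A$, take $i=j=s$ for the forward direction, and for the converse pass to the larger index (you take $j\ge i$, the paper takes $i\ge j$) and use $a_{k+1}>a_k$ to collapse the condition to $a_s<2s$. Your first paragraph's observation that the lemma is just a restatement of Lemma~\ref{l.intcond1} via the identification of ``$(A,A)$ cross intersecting'' with ``$F(r,n,\{A\})$ intersecting'' is a correct and slightly slicker alternative that the paper does not spell out.
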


\begin{proof}

If $A\prec[s,2s-1]$, then $a_{s}<2s$, so the pair $i,j=s$ shows
$A,A$ is cross intersecting. Conversely, assume there exists $i$
and $j$ such that $i+j>a_{i},a_{j}$. Without loss of generality,
$i\ge j$, so 
\[
2i\ge i+j>a_{i}.
\]
This is sufficient to show $A\prec[i,2i-1]$, since $a_{k}\le a_{k+1}-1$.\end{proof}

\begin{proof}[Proof of Proposition \ref{p.intcond}]For any $A,B\in\A$,
we have $F(r,n,\{A,B\})\subset\A$, so the proof is an immediate consequence
of the Lemmas \ref{l.intcond1}, \ref{l.intcond2}, and \ref{l.intcond3}.\end{proof}

\section{Preliminaries for the proof of Theorem \ref{t.onegen} \label{s.onegenprelims}}

In this section, we establish preliminaries that will be necessary
for the proof of Theorem \ref{t.onegen} in Section \ref{s.onegen}.
Many of these same techniques will be used in the proof of Theorem
\ref{t.multgen} in Section \ref{s.multgen} as well. We begin with
an outline of the proof of Theorem \ref{t.onegen}.

Let $\A_{n,r,s}=F(r,n,\{[s,2s-1]\})$. Notice that by Lemma \ref{l.intcond1},
it is sufficient to consider families of this form. We begin by computing
an explicit formula for $f_{X}(n,r,s)=|\A_{n,r,s}(X)|$ and then we
substitute $n=ar$. In each of the cases, we determine for which $a$,
independent of $s$, we have 
\[
\lim_{r\rightarrow\infty}\frac{f_{X}(ar,r,s)}{|\S_{ar,r}(X)|}<1
\]

In each case, we will show this limit is indeed less than 1 for $a>\varphi^{2}$.
After we have done this, we will re-analyze our formulas to find how
large $r$ must be so that $\frac{f(ar,r,s)}{|S_{ar,r}(X)|}<1$. 

The function $f_{X}$ will break into two parts, with a subsection
devoted to each part. The first part (Subsection \ref{ss.sink}) is
negligibly small in comparison with $|\S(X)|$ for large $r$, which
we will show using Stirling's approximation. The other part (Subsection
\ref{ss.decins}) is decreasing in $s$, so we only need to evaluate
the above limit for small values of $s$. 

In \cite{borg}, Borg proved that for a compressed family $\A$, and
$X,X'\subset[n]$ with $X'\ge X$ we have $|\A(X)|\ge|\A(X')|$. Notice
that $\{r+2\}$, $\{4,r+2\}$, $\{2,4,r+2\}$ and $\{2,\ldots,|X|,r+2\}$,
are the minimal elements in each of the cases of Theorem \ref{t.barber},
thus it suffices to assume $X$ is one of these sets. This is important,
because it greatly reduces the number of cases we need to consider. 

To begin, we need to count $|\A_{n,r,s}(X)|$ for these $X$.

\subsection{Counting $|\A_{n,r,s}(X)|$ \label{ss.countA}}

\begin{lem}\label{l.countform}

Let $X=\{r+2\},\,\{4,r+2\},\,\{2,4,r+2\}$ or $\{2,3,\ldots,t,r+2\}$,
where $t=|X|$, and let $r\ge4$.

\noindent If $t=2$, $s=2$, then
\[
|\A_{n,r,2}(X)|=2\binom{n-3}{r-3}-\binom{n-4}{r-4}+2\left(2\binom{n-4}{r-3}-\binom{n-5}{r-4}\right).
\]
If $t=2$, $s=3$, then
\[
|\A_{n,r,3}(X)|=2\binom{n-4}{r-4}-\binom{n-5}{r-5}+3\binom{n-4}{r-3}+3\binom{n-6}{r-4}+3\binom{n-5}{r-3}.
\]
If $t=3$, $s=2$, then 
\[
|\A_{n,r,2}(X)|=\binom{n-2}{r-2}+\binom{n-3}{r-2}+2\binom{n-4}{r-3}-\binom{n-5}{r-4}.
\]
If $t=3$, $s=3$, then 
\[
|\A_{n,r,3}(X)|=\binom{n-3}{r-3}+3\binom{n-4}{r-3}+5\binom{n-5}{r-3}+\binom{n-6}{r-4}.
\]
Otherwise, we have
\begin{eqnarray}
|\A_{n,r,s}(X)| & = & \sum_{i=s}^{2s-1}\left(\binom{i-1}{s-1}-\binom{i-t}{s-1}\right)\binom{n-i}{r-s}+\sum_{i=s}^{\min\{r+1,2s-1\}}\binom{i-t}{s-1}\binom{n-i-1}{r-s-1}\label{eq:countform}\\
 &  & +\binom{r+2-t}{s-1}\binom{n-r-2}{r-s}+\sum_{i=r+3}^{2s-1}\binom{i-t-1}{s-2}\binom{n-i}{r-s}\nonumber 
\end{eqnarray}
where $\binom{r+2-t}{s-1}\binom{n-r-2}{r-s}+\sum_{i=r+3}^{2s-1}\binom{i-t-1}{s-2}\binom{n-i}{r-s}$
only appears for $2s-1\ge r+2$.
\end{lem}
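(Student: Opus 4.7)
The plan is to partition $\A_{n,r,s}$ according to the value $i = a_s$ of the $s$-th element of each member $A$. Since $A \prec [s, 2s-1]$ forces $a_s \in [s, 2s-1]$, and since $a_j < a_s = i$ automatically gives $a_j \leq i - (s - j) \leq s + j - 1$ for each $j < s$, once $i$ is fixed the first $s-1$ entries range freely over $(s-1)$-subsets of $[1, i-1]$ and the last $r-s$ entries range freely over $(r-s)$-subsets of $[i+1, n]$. I would then count by complement:
\[
|\A_{n,r,s}(X)| \;=\; \sum_{i=s}^{2s-1}\Bigl(\binom{i-1}{s-1}\binom{n-i}{r-s} - [i\notin X]\,P_i\,Q_i\Bigr),
\]
where $P_i$ is the number of $(s-1)$-subsets of $[1, i-1] \setminus X$ and $Q_i$ is the number of $(r-s)$-subsets of $[i+1, n] \setminus X$.

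For the ``otherwise'' formula, I would compute $P_i$ and $Q_i$ as if $X$ were the regularized set $X^* := [2,t] \cup \{r+2\}$ (with $[2,1] = \emptyset$ when $t=1$); this agrees with the stated $X$ except when $t \in \{2, 3\}$, and in those exceptional cases the $X$ vs.\ $X^*$ discrepancy only alters $P_i$ or $[i \in X]$ for $i \leq 4$, which lies outside the sum whenever $s \geq 4$ (and causes no net error for the remaining boundary case $(t,s) = (3,3)$ since $P_3 = 0$ there). Using $X^*$, I would split the sum into three regimes and simplify via Pascal's identity. In the range $s \leq i \leq r+1$, $P_i = \binom{i-t}{s-1}$ and $Q_i = \binom{n-i-1}{r-s}$, and expanding $\binom{n-i}{r-s} = \binom{n-i-1}{r-s} + \binom{n-i-1}{r-s-1}$ splits the summand as the Term 1 and Term 2 contributions at $i$. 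At $i = r+2$ we have $i \in X^*$, so the full $\binom{r+1}{s-1}\binom{n-r-2}{r-s}$ is counted, and splitting $\binom{r+1}{s-1}$ as $[\binom{r+1}{s-1}-\binom{r+2-t}{s-1}] + \binom{r+2-t}{s-1}$ yields Term 1 plus Term 3. In the range $r+3 \leq i \leq 2s-1$, $P_i = \binom{i-t-1}{s-1}$ and $Q_i = \binom{n-i}{r-s}$, and the identity $\binom{i-t}{s-1} = \binom{i-t-1}{s-1} + \binom{i-t-1}{s-2}$ rewrites the summand as Term 1 plus Term 4 contributions at $i$. Summing over $i$ and regrouping yields the stated formula.

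For the special cases $(t,s) \in \{(2,2), (2,3), (3,2), (3,3)\}$, I would enumerate the first $s$ elements directly. Since $s \leq 3$, the list of admissible $\{a_1, \ldots, a_s\} \prec [s, 2s-1]$ has only a handful of entries (for instance $\{1,2\}, \{1,3\}, \{2,3\}$ when $s = 2$), and for each such first block I would count the extensions by inclusion-exclusion over the elements of $X \cap [a_s+1, n]$ that must be hit by the last $r-s$ entries.

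The main obstacle is bookkeeping in the general case: one must check that at the boundary values $i = t$, $r+1$, $r+2$, $r+3$ the three regimes glue correctly, and that when $i \in X^* \cap [2, t]$ (where the true count is the full $\binom{i-1}{s-1}\binom{n-i}{r-s}$) the formula still agrees because $\binom{i-t}{s-1} = 0$ for $s \geq 2$. The Pascal identities do the collapsing, but verifying the edge behavior, and separately verifying that the four listed exceptions are the only $(t,s)$ where the $X \neq X^*$ substitution actually changes the count, is the algebraic heart of the argument.
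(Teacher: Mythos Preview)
Your proposal is correct and follows essentially the same approach as the paper: both partition $\A_{n,r,s}$ by the value $i=a_s$ of the $s$-th element, count via the complement using $X^*=[2,t]\cup\{r+2\}$, and handle the exceptional $(t,s)\in\{2,3\}^2$ cases by direct enumeration of the first $s$ entries. The only cosmetic difference is that the paper organizes the count as ``sets hitting $[2,t]$'' plus ``sets hitting only $r+2$'' and derives the four terms separately, whereas you take the full complement at once and then regroup via Pascal's identity into the same four terms; the paper likewise notes that the general formula still holds when $s=3$, $t\in\{2,3\}$.
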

\begin{proof}
We begin by counting the cases with $s,t\in\{2,3\}$. All cases use
the same method, so we illustrate the $t=2$, $s=3$ case, so $X=\{4,r+2\}$.
There are $10$ possibilities for the first $3$ elements in $A_{n,r,3}$
and we separate them into groups $A,B,C$, and $D$. Sets of type
$A$ end in $3$. Sets of type $B$ end in $4$. Sets of type $C$
end in $5$, and don't contain $4$. Sets of type $D$ end in $5$
and contain $4$.
\begin{eqnarray*}
 & A:\{1,2,3\},\, B:\{1,2,4\},\{1,3,4\},\{2,3,4\}\\
 & C:\{1,2,5\},\{1,3,5\},\{2,3,5\},\, D:\{1,4,5\},\{2,4,5\},\{3,4,5\}
\end{eqnarray*}
For elements of type $A$, a set starting with $\{1,2,3\}$ is in
$\A_{n,r,3}(X)$ if $4$ or $r+2$ appear as one of the other elements,
so we use inclusion-exclusion to count $2\binom{n-4}{r-4}-\binom{n-5}{r-5}$
elements of $\A_{n,r,3}(X)$ with first $3$ elements $\{1,2,3\}$.
Since $4$ is in each set of type $B$, all of the $3\binom{n-4}{r-3}$
elements of $\A_{n,r,3}$ that have a set of type $B$ as the first
$3$ elements are in $\A_{n,r,3}(X)$. If the first 3 elements of
a set are of type $C$, then it cannot contain $4$, so there are
$3\binom{n-6}{r-4}$ sets in $\A_{n,r,3}$ with the first $3$ elements
of type $C$. Similar to sets of type $B$, there are $3\binom{n-5}{r-3}$
elements of $\A_{n,r,3}(X)$ with first $3$ elements of type $D$.
We leave it to the reader to check the formulas in the other cases.

We now count the general case. The method we use cannot be applied
in the case that both $s$ and $t$ are in $\{2,3\}$. However, we
will see that in the case $s=3$, the two methods give the same formula.

We first count the number of elements of $\A_{n,r,s}$ that contain
some element of $X$ other than $r+2$. We claim that there are 
\begin{equation}
\sum_{i=s}^{2s-1}\left(\binom{i-1}{s-1}-\binom{i-t}{s-1}\right)\binom{n-i}{r-s}\label{eq:count1}
\end{equation}
such elements. We begin by considering the case $t\ge4$. We consider
elements of $\A_{n,r,s}$ with $s$-th element $i$, for $s\le i\le2s-1$.
There are a total of $\binom{i-1}{s-1}\binom{n-i}{r-s}$ elements
with $s$-th element $i$, because the $s-1$ elements in positions
$1,\ldots,s-1$ are chosen from $[i-1]$, and the $r-s$ elements
in positions $i+1,\ldots,r$ are chosen from $[i+1,n]$. We now count
the number of elements with $s$-th element $i$ that do not contain
an element of $[2,t]$. We claim this number is $\binom{i-t}{s-1}\binom{n-i}{r-s}$.
Notice that if $i\in[2,t]$, then $i-t<s-1$, so in this case $\binom{i-t}{s-1}=0$
and the claim holds. If none of $2,\ldots,t$ occur in the first $s$
positions of some $B\in\A_{n,r,s}$, then $B\cap[2,t]=\emptyset$,
because in this case the second element must be at least $t+1$. This
means that if $B\cap[2,t]\ne\emptyset$, then $B$ contains an element
of $[2,t]$ in the first $s$ positions, so we just need to count
the number of sets with $i$ as the $s$-th element, that do not contain
an element of $[2,t]$ in the first $s-1$ positions, which gives
$\binom{i-t}{2-1}\binom{n-i}{r-s}$. To count the number of elements
of $\A_{n,r,s}$ that contain an element of $X$, we subtract the
number that do not intersect $X$ from the total, to get \eqref{eq:count1}
in the case $t\ge4$. 

In the case $t=1$, we have $\binom{i-1}{s-1}-\binom{i-t}{s-1}=0$.
In this case, the only element of $X$ is $r+2$, so \eqref{eq:count1}
trivially counts the number of elements of $\A_{n,r,s}$ containing
an element of $X$ that is not $r+2$. In the case $s,t\in\{2,3\}$,
the above argument may fail in that ``if $B\cap[2,t]\ne\emptyset$,
then $B$ contains an element of $[2,t]$ in the first $s$ positions''
is no longer applicable, since in these cases, $X\backslash\{r+2\}=\{4\}$
or $\{2,4\}$, which is not of the form $[2,t]$. For example, if
$s=2$, there exist sets with first 3 elements $1,2,4$, which contains
the element $4\in X$ in the last $r-s$ elements. However, for $s\ge4$,
then we still have ``if $B\cap(X\backslash\{r+2\})\ne\emptyset$,
then $B$ contains an element of $(X\backslash\{r+2\})$ in the first
$s$ positions'' so the same argument of the previous paragraph holds
in the case $t\in\{2,3\}$ and $s\ge4$.

We now count the number of elements containing $r+2$, but contain
no other elements of $X$. We first count the number of elements of
$\A_{n,r,s}$ that contain $r+2$ among the last $r-s$ elements.
We claim there are 
\begin{equation}
\sum_{i=s}^{\min\{r+1,2s-1\}}\binom{i-t}{s-1}\binom{n-i-1}{r-s-1}\label{eq:count2}
\end{equation}
such elements. As before, we consider sets with $i$ as the $s$-th
element, except in this case we restrict $s\le i\le\min\{r+1,2s-1\}$.
There are $\binom{i-t}{s-1}$ ways to choose the first $s-1$ elements
so as not to contain any element of $X\backslash\{r+2\}$ in the first
$s-1$ elements, and then there are $\binom{n-i-1}{r-s-1}$ ways to
choose the remaining $r-s$ elements so that $r+2$ is one of them.
Summing over $i$ gives \eqref{eq:count2}.

We now count the number of elements of $\A_{n,r,s}$ that contain
$r+2$ in the first $s$ elements, but contain no other elements of
$X$. We claim there are 
\begin{equation}
\binom{r+2-t}{s-1}\binom{n-r-2}{r-s}+\sum_{i=r+3}^{2s-1}\binom{i-t-1}{s-2}\binom{n-i}{r-s}\label{eq:count3}
\end{equation}
such elements, assuming $2s-1\ge r+2$. There are $\binom{r+2-t}{s-1}\binom{n-r-2}{r-s}$
elements with $r+2$ as the $s$-th element. As before, we count the
number of sets with $i$ as the $s$-th element. There are $\binom{i-t-1}{s-2}$
possibilities for the first $s$ elements (because $i$ and $r+2$
must be among them), and $\binom{n-i}{r-s}$ possibilities for the
remaining elements. This gives \eqref{eq:count3}.

Adding \eqref{eq:count1}, \eqref{eq:count2}, \eqref{eq:count3} together,
we obtain \eqref{eq:countform}.

It is important to notice that in the case $s=3$ and $t\in\{2,3\}$,
even though we cannot use the same argument, we still obtain the same
formula, i.e. we have 
\[
|\A_{n,r,3}(X)|=\sum_{i=s}^{2s-1}\left(\binom{i-1}{s-1}-\binom{i-t}{s-1}\right)\binom{n-i}{r-s}+\sum_{i=s}^{\min\{r+1,2s-1\}}\binom{i-t}{s-1}\binom{n-i-1}{r-s-1}
\]
for $s=3$. (Notice the other terms of \eqref{eq:countform} do not
appear since for $r\ge4$ and $s=3$, we have $2s-1<r+2$). The proof
of this is simple. When $t=3$, the binomial coefficients match exactly.
When $t=2$, we must use the relation $\binom{n}{k}+\binom{n}{k+1}=\binom{n+1}{k+1}$,
but it is still straightforward. \end{proof}

\begin{rem} Lemma \ref{l.countform} gives $|\A_{n,r,s}(X)|$ for
$r\ge4$. In the case $r=3$, it is easy to count $|\A_{n,3,2}(\{5\})|=3$,
$|\A_{n,3,2}(\{4,5\})|=6$, $|\A_{n,3,2}(\{2,4,5\})|=2n-3$, $|\A_{n,3,3}(\{5\})|=6$,
$|\A_{n,3,3}(\{4,5\})|=9$, $|\A_{n,3,3}(\{2,4,5\})|=10$.
\end{rem}

In the next two subsections, we analyze \eqref{eq:countform} term
by term. 
\subsection{\eqref{eq:count3} is negligibly small for large $r$\label{ss.sink}}

In this section, we show that \eqref{eq:count3} goes to $0$ exponentially
in $r$.

\begin{lem}\label{l.goestozero}

Let $n/r=a$, and $r\ge4$. Let 
\[
Q(n,r,s,t)=\begin{cases}
\frac{\binom{r+2-t}{s-1}\binom{n-r-2}{r-s}+\sum_{i=r+3}^{2s-1}\binom{i-t-1}{s-2}\binom{n-i}{r-s}}{\binom{n-2}{r-2}} & 2s-1\ge r+2\\
0 & else
\end{cases}
\]
For any fixed $a\ge2.1$, $\max\{Q(n,r,s,t)\}_{s=2}^{r}$ goes to
$0$ exponentially in $r$ as $r\rightarrow\infty$, for any $s$.
For $a\ge\varphi^{2}$, we have $Q(ar,r,s,t)\le3.25r^{3/2}(.954)^{r}$.

\end{lem}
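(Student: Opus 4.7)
Broadly, my plan is to bound the numerator of $Q(n,r,s,t)$ by a two-term expression involving central binomial coefficients, show that it is maximized over $s$ at $s_0=\lceil(r+3)/2\rceil$, and then apply Stirling's formula to extract the exponential decay rate. The critical value $a=\varphi^2$ appears as the threshold at which this rate crosses $\log(0.954)$.

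First I would simplify the numerator. The bound $\binom{r+2-t}{s-1}\leq\binom{r+1}{s-1}$ is immediate from $t\geq 1$. For the sum $\sum_{i=r+3}^{2s-1}\binom{i-t-1}{s-2}\binom{n-i}{r-s}$, I would note that the product $P(i)=\binom{i-t-1}{s-2}\binom{n-i}{r-s}$ is log-concave in $i$ with its unconstrained peak near $i\approx sa$; since $a>2$ the peak lies above the upper endpoint $2s-1$, so on $[r+3,2s-1]$ the maximum is attained at $i=2s-1$. With at most $r$ terms in the sum, this gives
\[
\text{numerator}\leq\binom{r+1}{s-1}\binom{n-r-2}{r-s}+r\binom{2s-2}{s-1}\binom{n-2s+1}{r-s}.
\]

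Next I would show that each summand (divided by $\binom{n-2}{r-2}$) is decreasing in $s$ for $s\geq s_0$. A ratio-of-consecutive-terms computation reduces this to $\frac{(r-s+2)(r-s)}{s(n-2r+s-1)}<1$, which holds already whenever $a>2$. Thus it suffices to control $Q$ at $s=s_0$. Fortunately $n-2s_0+1$ and $n-r-2$ differ by at most $1$, so both summands collapse to essentially the same central binomial expression $\binom{r+1}{\lfloor(r+1)/2\rfloor}\binom{n-r-2}{\lfloor(r-3)/2\rfloor}$. Applying Stirling's formula $\binom{N}{K}\sim\sqrt{N/(2\pi K(N-K))}\,e^{NH_e(K/N)}$, with $H_e(x)=-x\log x-(1-x)\log(1-x)$, then yields
\[
\frac{\binom{r+1}{\lfloor(r+1)/2\rfloor}\binom{n-r-2}{\lfloor(r-3)/2\rfloor}}{\binom{n-2}{r-2}}\;\sim\;C(a)\,r^{1/2}\,e^{r\phi_a},\qquad \phi_a=\log 2+(a-1)H_e\!\left(\tfrac{1}{2(a-1)}\right)-aH_e\!\left(\tfrac{1}{a}\right).
\]

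The last step is to verify $\phi_a\leq\log(0.954)$ for $a\geq\varphi^2$. At $a=\varphi^2$ the identities $\varphi^2-1=\varphi$ and $1-\varphi^{-2}=\varphi^{-1}$ simplify $\phi_{\varphi^2}$ to $\tfrac{3}{2}\log 2-(\tfrac{3}{2}+\varphi)\log\varphi+\tfrac{2\varphi-1}{2}\log\tfrac{2\varphi}{2\varphi-1}\approx-0.0476$, which sits just below $\log(0.954)\approx-0.0471$; a short derivative check confirms $\phi_a$ is monotone decreasing in $a$, so the bound extends to all $a\geq\varphi^2$. Combined with the factor $r$ from the sum bound and the $r^{1/2}$ from Stirling, this delivers $Q\leq 3.25\,r^{3/2}(0.954)^r$. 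For the weaker regime $a\geq 2.1$, the same calculation gives $\phi_{2.1}\approx-0.003<0$, hence (much slower) exponential decay. The main obstacle will be making Stirling's estimate uniform enough in $a$ and $r$ to pin down the explicit constant $3.25$: because $\phi_{\varphi^2}$ is so close to the target, I cannot afford to absorb the $O(1/r)$ Stirling corrections into $\sim$-asymptotics and must instead use a two-sided form such as Robbins's $n!=\sqrt{2\pi n}(n/e)^n e^{\theta_n/(12n)}$ with $0<\theta_n<1$; parity of $r$ in defining $s_0$ is a secondary book-keeping nuisance that only affects the constant.
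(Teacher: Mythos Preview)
Your plan is sound and arrives at the same exponential rate as the paper, but by a genuinely different route. The paper applies the Stirling bounds $\sqrt{2\pi n}(n/e)^n\le n!\le e\sqrt{n}(n/e)^n$ to each individual term $q(n,r,i,s)=\binom{i-2}{s-2}\binom{n-i}{r-s}\big/\binom{n-2}{r-2}$, extracts an exponential base $B(I,S,a)$ with $I=i/r$, $S=s/r$, and then uses a computer to maximise $B$ over the two-parameter region $\{\tfrac12\le S\le 1,\ 1\le I\le 2S\}$, locating the maximum at the corner $(I,S)=(1,\tfrac12)$, where $B=0.954$ at $a=\varphi^2$ and $B=0.9978$ at $a=2.1$. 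You instead reach that corner analytically: log-concavity of $P(i)$ with unconstrained peak at $i\approx sa>2s-1$ forces the maximum in $i$ to the upper edge $i=2s-1$, and the ratio test in $s$ then forces the maximum down to $s_0=\lceil(r+3)/2\rceil$, which is exactly the corner $(I,S)\approx(1,\tfrac12)$. A short calculation confirms that your entropy expression satisfies $\phi_a=\log B(1,\tfrac12,a)$, so the two endpoints agree. Your route eliminates the two-variable computer optimisation at the price of the monotonicity checks; one small gap is that the ratio $\frac{(r-s+2)(r-s)}{s(n-2r+s-1)}$ you quote governs only the first summand, while the second summand $r\binom{2s-2}{s-1}\binom{n-2s+1}{r-s}$ has consecutive-term ratio $\frac{2(2s-1)(r-s)(n-r-s+1)}{s(n-2s+1)(n-2s)}$, which at $s\approx r/2$ is $\approx\frac{2(a-3/2)}{(a-1)^2}=1-\frac{(a-2)^2}{(a-1)^2}<1$ for $a>2$ and so also works, but deserves its own line. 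On the explicit constant $3.25$: the paper obtains it not by tracking Robbins-type corrections but by numerically maximising the full Stirling upper bound $T(\varphi^2 r,r,i,s)$; given the $\sim 5\times 10^{-4}$ margin between $\phi_{\varphi^2}$ and $\log(0.954)$ that you correctly flag, you will almost certainly need a similar numerical step to nail down that constant uniformly in $r\ge 4$.
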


\begin{proof}
We address each term in ${\displaystyle \sum_{i=r+3}^{2s-1}\binom{i-t-1}{s-2}\binom{n-i}{r-s}}$
individually. We begin by using Stirling's approximation, and then
use a computer to find the maximal term. 

Notice that in the case $r=s$, the $Q(n,r,s,t)$ reduces to 
\[
\frac{\binom{r+2-t}{s-1}+\sum_{i=r+3}^{2s-1}\binom{i-t-1}{s-2}}{\binom{n-2}{r-2}}.
\]
Since the numerator is independent of $n$, the result is simple. 

We now address the case $2\le s<r$. Notice that ${\displaystyle \binom{i-t-1}{s-2}-\binom{i-(t-1)-1}{s-2}=\binom{i-t-2}{s-3}\ge0}$,
so $\binom{i-t-1}{s-2}$ is decreasing in $t$, so we may assume $t=1$.
We first address ${\displaystyle \frac{\sum_{i=r+3}^{2s-1}\binom{i-t-1}{s-2}\binom{n-i}{r-s}}{\binom{n-2}{r-2}}}$.
The same method applies for ${\displaystyle \binom{r+2-t}{s-1}\binom{n-r-2}{r-2}/\binom{n-2}{r-2}}$,
so we omit the analysis of this term.

Define $q(n,r,i,s)$ by
\[
q(n,r,i,s)=\frac{\binom{i-2}{s-2}\binom{n-i}{r-s}}{\binom{n-2}{r-2}}=\frac{s(s-1)n(n-1)}{i(i-1)r(r-1)}\cdot\frac{i!(n-i)!r!(n-r)!}{s!(i-s)!(r-s)!(n-i-r+s)!n!}.
\]
Recall Stirling's approximation $n!\sim\sqrt{2\pi n}\left(n/e\right)^{n}$.
Although this only holds for large $n$, in general we have $\sqrt{2\pi n}\left(n/e\right)^{n}\le n!\le e\sqrt{n}\left(n/e\right)^{n}$.
Applying this gives
\begin{eqnarray*}
q(n,r,i,s) & \le & \frac{s(s-1)n(n-1)e^{4}}{i(i-1)r(r-1)(2\pi)^{2}}\cdot\sqrt{\frac{i(n-i)r(n-r)}{2\pi s(i-s)(r-s)(n-i-r+s)n}}\\
 &  & \cdot\frac{i^{i}(n-i)^{n-i}r^{r}(n-r)^{n-r}}{s^{s}(i-s)^{i-s}(r-s)^{r-s}(n-i-r+s)^{n-i-r+s}n^{n}}
\end{eqnarray*}

Notice many of the powers of $e$ from Stirling's approximation have
canceled. We substitute $n=ar$. We wish to find for which $a$ this
term goes to $0$ exponentially in $r$. To do this, divide numerator
and denominator by $r^{2ar}$, and pull out an $r$-th root. We also
substitute $I=i/r$, and $S=s/r$. We have also used the fact $(s-1)/(i-1)\le s/i$.
This gives
\begin{eqnarray}
q(n,r,i,s)\le T(n,r,i,s) & = & \frac{s^{2}a(ar-1)e^{4}}{i^{2}(r-1)(2\pi)^{2}}\cdot\sqrt{\frac{i(ar-i)(ar-r)}{2\pi s(i-s)(r-s)(ar-i-r+s)a}}\nonumber \\
 &  & \cdot\left(\frac{I^{I}(a-I)^{a-I}(a-1)^{a-1}}{S^{S}(I-S)^{I-S}(1-S)^{1-S}(a-I-1+S)^{a-I-1+S}a^{a}}\right)^{r}.\label{eq:exppart}
\end{eqnarray}

We first address ${\displaystyle B(I,S,a)=\frac{I^{I}(a-I)^{a-I}(a-1)^{a-1}}{S^{S}(I-S)^{I-S}(1-S)^{1-S}(a-I-1+S)^{a-I-1+S}a^{a}}}$.
Notice that we have the bounds $1/2\le S\le1$ and $1\le I\le2S$.
Using a computer, we compute $\frac{\partial}{\partial a}B(I,S,a)$,
and then maximize this function. We find it is negative for all $I$
and $S$ in the previously stated range, with $a\ge2$. Thus for fixed
$I$ and $S$, we have $B(I,S,a)$ is decreasing in $a$. We find
that the maximum occurs at $S=\frac{1}{2}$, $I=1$. This gives $B(1,\frac{1}{2},2.1)=.9978$.
Thus each term in the first sum of \eqref{eq:countform} goes to $0$
exponentially in $r$ for $a\ge2.1$. To finish the proof of the first
part of the proposition, notice that we have shown each term in the
sum goes to $0$ exponentially in $r$. Since there are at most $r-4$
terms, the sum still goes to $0$ exponentially in $r$. 

We now wish to evaluate how quickly this term goes to $0$ when $a\ge\varphi^{2}$.
Using a computer as above, we find $T(n,r,i,s)$ is decreasing in
$a$, so $q(n,r,i,s)\le T(\varphi^{2}r,r,i,s)$ for $a\ge\varphi^{2}$.
Maximizing this, we find $T(\varphi^{2}r,r,i,s)\le3.25\sqrt{r}(.954)^{r}$.
Accounting for the fact that there are less than $r$ terms, we get
$Q(ar,r,s,t)\le3.25r^{3/2}(.954)^{r}$ for $a\ge\varphi^{2}$. \end{proof}

\subsection{\eqref{eq:count1}+\eqref{eq:count2} is decreasing in $s$ \label{ss.decins}}

In this section, we first show that \eqref{eq:count1} is decreasing
in $s$. It is not true that \eqref{eq:count2} is always decreasing
in $s$, but when combined with \eqref{eq:count1}, the total is decreasing
in $s$. To show this, we must address several cases. We address \eqref{eq:count1}
first. 

\begin{lem}\label{l.decinswt}

For fixed $n,r$, such that $n/r=a\ge2$, we have $g(n,r,s,t)=\sum_{i=s}^{2s-1}\left(\binom{i-1}{s-1}-\binom{i-t}{s-1}\right)\binom{n-i}{r-s}$
is decreasing in $s$.

\end{lem}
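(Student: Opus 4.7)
The plan is to reinterpret $g(n,r,s,t)$ as the cardinality of a set of $r$-subsets and then compare $|E_s|$ with $|E_{s+1}|$ via their symmetric difference. First I would note that $\bigl(\binom{i-1}{s-1}-\binom{i-t}{s-1}\bigr)\binom{n-i}{r-s}$ counts the $r$-subsets $A=\{a_1<\cdots<a_r\}\subseteq[n]$ with $a_s=i$ and $a_1\le t-1$ (the difference of binomials counts the $(s-1)$-subsets of $[1,i-1]$ whose minimum is at most $t-1$). Hence $g(n,r,s,t)=|E_s|$ where
\[
E_s=\Bigl\{A\in\binom{[n]}{r}:a_1\le t-1,\ a_s\le 2s-1\Bigr\}.
\]

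Next I would analyze the symmetric difference $E_s\triangle E_{s+1}$. A set in $E_s\setminus E_{s+1}$ has $a_s\le 2s-1$ and $a_{s+1}\ge 2s+2$, while a set in $E_{s+1}\setminus E_s$ has $a_s\ge 2s$ and $a_{s+1}\le 2s+1$, which forces $a_s=2s$ and $a_{s+1}=2s+1$. The latter count is immediate:
\[
|E_{s+1}\setminus E_s|=\Bigl(\binom{2s-1}{s-1}-\binom{2s-t}{s-1}\Bigr)\binom{n-2s-1}{r-s-1}.
\]
For the former I would decompose by the values $a_s=i$ and $a_{s+1}=j$ and apply the hockey-stick identity twice---once to sum $\binom{n-j}{r-s-1}$ over $j\in[2s+2,n]$, and once to sum each of $\binom{i-1}{s-1}$ and $\binom{i-t}{s-1}$ over $i\in[s,2s-1]$---to obtain
\[
|E_s\setminus E_{s+1}|=\Bigl(\binom{2s-1}{s}-\binom{2s-t}{s}\Bigr)\binom{n-2s-1}{r-s}.
\]

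Comparing the two counts finishes the proof. Using $\binom{2s-1}{s-1}=\binom{2s-1}{s}$ together with the ratio computation $\binom{2s-t}{s-1}/\binom{2s-t}{s}=s/(s-t+1)\ge 1$ for $t\ge 1$, the first factor of $|E_s\setminus E_{s+1}|$ dominates that of $|E_{s+1}\setminus E_s|$. The other factor satisfies $\binom{n-2s-1}{r-s}\ge\binom{n-2s-1}{r-s-1}$ exactly when $r-s-1\le(n-2s-2)/2$, i.e.\ $n\ge 2r$, which is the hypothesis $a\ge 2$. Since all factors involved are nonnegative (one checks $\binom{2s-1}{s}\ge\binom{2s-t}{s-1}$ via $\binom{2s-1}{s-1}\ge\binom{2s-t}{s-1}$), multiplying yields $|E_{s+1}\setminus E_s|\le|E_s\setminus E_{s+1}|$, equivalently $g(n,r,s+1,t)\le g(n,r,s,t)$.

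The main obstacle will be the degenerate regimes---when $s<t$ several binomials vanish, and when $s$ is close to $r$ the range $[2s+2,n]$ for $a_{s+1}$ can shrink or be empty---where I must verify that the hockey-stick calculation and the sign of each factor remain valid. This reduces to a brief case split based on the sign of $s-t+1$ and on whether $n-2s-1\ge r-s$, both of which are controlled by $n\ge 2r$.
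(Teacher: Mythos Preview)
Your proof is correct and takes a genuinely different route from the paper's. The paper applies summation by parts to rewrite $g(n,r,s,t)$ in terms of $g(n,r,s+1,t)$ plus three error terms, then expands all the binomials, substitutes $n=ar$, and obtains a quadratic in $a$ whose discriminant happens to be a perfect square, giving explicit roots $a=1+s/r-1/r$ and $a=1+s/r$.

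Your combinatorial interpretation $g(n,r,s,t)=|E_s|$ with $E_s=\{A:a_1\le t-1,\ a_s\le 2s-1\}$ leads, via the symmetric difference, to the clean identity
\[
g(s)-g(s+1)=\Bigl(\tbinom{2s-1}{s}-\tbinom{2s-t}{s}\Bigr)\tbinom{n-2s-1}{r-s}-\Bigl(\tbinom{2s-1}{s-1}-\tbinom{2s-t}{s-1}\Bigr)\tbinom{n-2s-1}{r-s-1}.
\]
In fact the paper's three error terms collapse to exactly this expression after two applications of Pascal's rule, so the two arguments agree at the level of the difference formula; where they diverge is in how the sign is established. You compare the two products factor by factor---using $\tbinom{2s-1}{s}=\tbinom{2s-1}{s-1}$, the monotonicity $\tbinom{2s-t}{s}\le\tbinom{2s-t}{s-1}$, and the unimodality inequality $\tbinom{n-2s-1}{r-s-1}\le\tbinom{n-2s-1}{r-s}$ (equivalent to $n\ge 2r$)---whereas the paper multiplies everything out and factors a polynomial. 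Your route is shorter and more conceptual; the only thing it gives up is the sharper intermediate threshold $n\ge r+s$ that falls out of the paper's factorization, but since the lemma only claims $a\ge 2$ this is not needed. The degenerate regimes you flag are indeed harmless: for $s\le r-1$ and $n\ge 2r$ one has $n-2s-1\ge 1$ and $2s+2\le n$, so the hockey-stick sums are nonempty and all factors are nonnegative; and when $s<t$ the vanishing of $\tbinom{2s-t}{s}$ only strengthens the first-factor comparison.
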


\begin{proof}

The idea of this proof is to use summation by parts to get an expression
for $g(n,r,s,t)$ in terms of $g(n,r,s+1,t)$, along with some error
terms. We then analyze the error terms to find for which $a$ we have
$g(n,r,s,t)\ge g(n,r,s+1,t)$.

We begin by applying summation by parts
\[
\sum_{i=s}^{2s-1}\binom{i-t}{s-1}\binom{n-i}{r-s}=\binom{n-2s+1}{r-s}\sum_{i=s}^{2s-1}\binom{i-t}{s-1}-\sum_{i=s}^{2s-2}\left(\binom{n-i-1}{r-s}-\binom{n-i}{r-s}\right)\sum_{k=s}^{i}\binom{k-t}{s-1}
\]
We use the identities $\sum_{k=s}^{i}\binom{k-t}{s-1}=\binom{i-t+1}{s}$
and $\binom{n-i-1}{r-s}-\binom{n-i}{r-s}=-\binom{n-i-1}{r-s-1}$.
We also change the indices on the outer sum in the second term, which
accounts for the $\binom{n-2s+1}{r-s}\binom{2s-t}{s}-\binom{n-2s-1}{r-s-1}\binom{2s+1-t}{s}-\binom{n-2s}{r-s-1}\binom{2s-t}{s}$
below 
\begin{eqnarray}
\sum_{i=s}^{2s-1}\binom{i-t}{s-1}\binom{n-i}{r-s} & = & \sum_{i=s+1}^{2s+1}\binom{n-i}{r-s-1}\binom{i-t}{s}+\binom{n-2s+1}{r-s}\binom{2s-t}{s}\nonumber \\
 &  & -\binom{n-2s-1}{r-s-1}\binom{2s+1-t}{s}-\binom{n-2s}{r-s-1}\binom{2s-t}{s}.\label{eq:sumbyparts}
\end{eqnarray}
Subtracting equation \eqref{eq:sumbyparts} from equation \eqref{eq:sumbyparts}
with the substitution $t=1$ gives 
\begin{eqnarray*}
g(n,r,s,t) & = & g(n,r,s+1,t)+\binom{n-2s+1}{r-s}\left(\binom{2s-1}{s}-\binom{2s-t}{s}\right)-\\
 &  & \binom{n-2s-1}{r-s-1}\left(\binom{2s}{s}-\binom{2s+1-t}{s}\right)-\binom{n-2s}{r-s-1}\left(\binom{2s-1}{s}-\binom{2s-t}{s}\right).
\end{eqnarray*}

Let $A=\binom{2s}{s}$, $B=\binom{2s-t}{s}$, and $C=\binom{2s+1-t}{s}$.
Notice $\binom{2s}{s}=2\binom{2s-1}{s}$. By expanding the binomial
coefficients, and pulling out common terms, we find
\begin{eqnarray*}
g(n,r,s,t)-g(n,r,s+1,t) & = & \frac{(n-2s-1)!}{2(r-s)!(n-r-s+1)!}((n-2s+1)(n-2s)(A-2B)\\
 &  & -2(r-s)(n-r-s+1)(A-C)-(n-2s)(r-s)(A-2B)).
\end{eqnarray*}

We want to find when $g(n,r,s,t)-g(n,r,s+1,t)\ge0$. We substitute
$n=ar$, and multiply by $\frac{2(r-s)!(n-r-s+1)!}{(n-2s-1)!}$. This
gives us a quadratic expression in $a$. By finding the roots, we
will find for which $a$ the function $g(n,r,s,t)$ is decreasing
in $s$. Using the quadratic formula to find the roots, we see that
the discriminant is a square, namely
\[
\left(r^{2}A+2r^{2}B+rA-2rB-rsA-2rsB-2r^{2}C+2rsC\right)^{2}.
\]
This gives that the roots are $a=1+s/r-1/r$ and $a=1+s/r$. This
means $g(n,r,s,t)\ge g(n,r,s+1,t)$ for $n/r=a\ge1+s/r$. Since $s/r\le1$,
we have $g(n,r,s,t)$ is decreasing in $s$ for $a\ge2$.\end{proof}

We may now show \eqref{eq:count1}+\eqref{eq:count2} is decreasing
in $s$. 

\begin{lem}\label{l.ynofactor}

For fixed $n,r$, if $n/r=a>7/3$, then
\[
D(n,r,s,t)=\sum_{i=s}^{2s-1}\left(\binom{i-1}{s-1}-\binom{i-t}{s-1}\right)\binom{n-i}{r-s}+\sum_{i=s}^{\min\{r+1,2s-1\}}\binom{i-t}{s-1}\binom{n-i-1}{r-s-1}
\]
 is decreasing in $s$.

\end{lem}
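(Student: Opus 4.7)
The plan is to follow the same summation-by-parts template as in the proof of Lemma \ref{l.decinswt}, applied now to the additional sum
$$H(n,r,s,t) = \sum_{i=s}^{\min\{r+1,2s-1\}}\binom{i-t}{s-1}\binom{n-i-1}{r-s-1},$$
and then combine the outcome with the formula for $g(n,r,s,t)-g(n,r,s+1,t)$ already established. Writing $D = g + H$, one has $D(n,r,s,t)-D(n,r,s+1,t) = [g(n,r,s,t)-g(n,r,s+1,t)] + [H(n,r,s,t)-H(n,r,s+1,t)]$, and the first bracket is controlled from the earlier lemma.

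First, I would compute $H(n,r,s,t)-H(n,r,s+1,t)$ by the same mechanism used for $g$: apply Abel summation, then the identities $\sum_{k=s}^{i}\binom{k-t}{s-1} = \binom{i-t+1}{s}$ and $\binom{n-i-2}{r-s-1}-\binom{n-i-1}{r-s-1} = -\binom{n-i-2}{r-s-2}$ to reduce everything to a small number of boundary terms. Because the upper limit $\min\{r+1, 2s-1\}$ behaves differently according to $s$, the computation splits into three regimes: (a) $2s+1 \le r+1$, where both sums have the natural upper limits $2s-1$ and $2s+1$ and the summation by parts matches the template of Lemma \ref{l.decinswt}; (b) $2s-1 > r+1$, where both upper limits are flat at $r+1$; and (c) the transition step $2s-1 \le r+1 < 2s+1$, where the upper limits disagree and extra correction terms appear.

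Next, after substituting $n = ar$ and collecting boundary binomial coefficients, $D(n,r,s,t)-D(n,r,s+1,t)$ is a polynomial in $a$ whose sign we must determine. The analysis in Lemma \ref{l.decinswt} already showed that the $g$-part has a clean factorization with roots at $a = 1 + s/r$ and $a = 1 + (s-1)/r$; I expect the $H$-part to shift the effective threshold upward. Concretely, I anticipate that after clearing common factorials and using $s/r \le 1$ along with $(r-s)/(n-2s) \le 1$ the expression simplifies to a quadratic in $a$ whose larger root is at most $7/3$, delivering the claimed inequality. Identifying the discriminant explicitly, as in the proof of Lemma \ref{l.decinswt}, should pin down the constant.

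The main obstacle will be the case (c) transition and the bookkeeping of the boundary terms produced by summation by parts. These terms do not vanish automatically; rather, the stronger threshold $7/3$ (as opposed to the bound $2$ from $g$ alone) has to arise from cancellation between $H$'s boundary terms and the error terms already present in $g$. Tracking this cancellation, and verifying that the combined discriminant is a perfect square (or at least nonnegative for $a > 7/3$), is likely to require careful algebra and possibly a computer-algebra check in the spirit of the one used in Lemma \ref{l.decinswt}.
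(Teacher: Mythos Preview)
Your approach is in the right spirit but diverges from the paper in several ways, and two of your expectations are incorrect.

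First, the paper does not treat $H$ separately and then add. In the regime $2s-1\le r+1$ it first combines the two summands via the identity
\[
\binom{i-t}{s-1}\binom{n-i-1}{r-s-1}+\left(\binom{i-1}{s-1}-\binom{i-t}{s-1}\right)\binom{n-i}{r-s}
=\binom{i-1}{s-1}\binom{n-i}{r-s}-\binom{i-t}{s-1}\binom{n-i-1}{r-s},
\]
and only then applies \eqref{eq:sumbyparts} twice (once with $t=1$, once with $n,r$ shifted to $n-1,r-1$). This avoids your regime (c) bookkeeping entirely. Moreover, your regime (b)---the range $s\ge r/2+1$---is handled trivially in the paper: when the upper limit is flat at $r+1$, increasing $s$ drops a term and shrinks each remaining term, so $H$ is visibly decreasing there, and $g$ is already handled by Lemma~\ref{l.decinswt}.

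Second, after substituting $n=ar$, the paper obtains a \emph{cubic} in $a$, not a quadratic; one linear factor $a-(1+(s-1)/r)$ (inherited from the $g$-analysis) divides out, leaving a quadratic. The discriminant of that quadratic is \emph{not} a perfect square. The paper instead writes $\Delta=(\Delta')^2-8BCr^4(r+s)^2$ for an explicit $\Delta'$, uses $\sqrt{\Delta}\le|\Delta'|$ to bound the roots from above, and checks both $\tfrac{b\pm\Delta'}{c}$. For $t\ge 3$ this yields roots at most $7/3$. For $t=2$ this crude bound fails, and the paper has to substitute the exact relations $A=\tfrac{s-1}{2(2s-1)}C$, $B=\tfrac{C}{2}$ and re-solve the resulting quadratic to see its roots are at most $2$. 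Your proposal does not anticipate this $t=2$ obstruction.
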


\begin{proof}

We first consider $s$ such that $\min\{2s-1,r+1\}=r+1$, which gives
$s\ge r/2+1$. In this case, when $s$ increases by 1, the expression
${\displaystyle \sum_{i=s}^{\min\{r+1,2s-1\}}\binom{i-t}{s-1}\binom{n-i-1}{r-s-1}}$
loses a term, and each term is smaller, so this sum is decreasing
in $s$. By Lemma \ref{l.decinswt}, we have $g(n,r,s,t)=\sum_{i=s}^{2s-1}\left(\binom{i-1}{s-1}-\binom{i-t}{s-1}\right)\binom{n-i}{r-s}$
is decreasing in $s$, so $D(n,r,s,t)$ is decreasing in $s$ for
$s\ge r/2+1$. 

We now may assume $\min\{2s-1,r+1\}=2s-1$. In this case, we have
\[
D(n,r,s,t)=\sum_{i=s}^{2s-1}\binom{i-t}{s-1}\binom{n-i-1}{r-s-1}+\left(\binom{i-1}{s-1}-\binom{i-t}{s-1}\right)\binom{n-i}{r-s}.
\]

We must address the cases $t=1$ and $t\ge2$ separately, because
when $t=1$, we have $\binom{i-1}{s-1}-\binom{i-t}{s-1}=0$. The case
$t=1$ may be proved using a proof similar to that of Lemma \ref{l.decinswt},
although it is much simpler. Thus we just consider the case $t\ge2$.

Let $A=\binom{2s-t}{s}$, $B=\binom{2s+1-t}{s}$, and $C=\binom{2s}{s}$.
We begin by simplifying the expression. By the identity $\binom{n}{k}+\binom{n}{k+1}=\binom{n+1}{k+1}$,
we have
\[
\binom{i-t}{s-1}\binom{n-i-1}{r-s-1}+\left(\binom{i-1}{s-1}-\binom{i-t}{s-1}\right)\binom{n-i}{r-s}=\binom{i-1}{s-1}\binom{n-i}{r-s}-\binom{i-t}{s-1}\binom{n-i-1}{r-s}.
\]
As in Lemma \ref{l.decinswt}, we use summation by parts. We use \eqref{eq:sumbyparts}
twice, once with $t=1$, and once with general $t$, but $n$ and
$r$ replaced by $n-1$ and $r-1$ to get 
\begin{eqnarray*}
D(n,r,s,t) & = & \binom{n-2s+1}{r-s}\frac{C}{2}-\binom{n-2s-1}{r-s-1}C-\binom{n-2s}{r-s-1}\frac{C}{2}\\
 &  & -\binom{n-2s}{r-s}A+\binom{n-2s-2}{r-s-1}B+\binom{n-2s-1}{r-s-1}A+D(n,r,s+1,t).
\end{eqnarray*}
We expand the binomial coefficients in the above expression to get
$D(n,r,s,t)-D(n,r,s+1,t)$ is equal to
\begin{eqnarray*}
 &  & \frac{(n-2s-2)!}{2(n-r-s+1)!(r-s)!}\left((n-2s+1)(n-2s)(n-2s-1)C-\right.\\
\\
 &  & 2(n-2s)(n-2s-1)(n-r-s+1)A+2(r-s)(n-r-s)(n-r-s+1)B\\
 &  & +2(n-2s-1)(r-s)(n-r-s+1)A).
\end{eqnarray*}

We wish to find for which $a$ we have $D(n,r,s,t)-D(n,r,s+1,t)\ge0$.
By substituting the above expression into this inequality, multiplying
by $2(r-s)!(n-r-s+1)!/(n-2s-2)!$, and substituting $n=ar$, we get
the following inequality, cubic in $a$,
\begin{eqnarray*}
0 & \le & (-2r^{3}A+r^{3}C)a^{3}+(4r^{3}A+8r^{2}sA+2r^{3}B-2r^{2}sB-3r^{3}C-3r^{2}sC)a^{2}+\\
 &  & (-4r^{3}B+4rs^{2}B+2r^{3}C+8r^{2}sC+2rs^{2}C+2r^{2}B-2rsB+r^{2}C-rsC+2rA-rC)a\\
 &  & +4r^{2}sA+8rs^{2}A+4s^{3}A+2r^{3}B+2r^{2}sB-2rs^{2}B-2s^{3}B-4r^{2}sC-4rs^{2}C-2r^{2}B\\
 &  & +2s^{2}B-2r^{2}C+2rsC-4sA+2rC.
\end{eqnarray*}

We wish to find the roots of the left hand side, so that we will know
for which $a$ we have $D(n,r,s,t)-D(n,r,s+1,t)\ge0$. Recall that
in the proof of Lemma \ref{l.decinswt}, we saw $a=1+s/r-1/r$ was
a root. Using a computer, it is easy to check that $(a-(1+s/r-1/r))$
is a factor of the cubic equation as well. Thus we have reduced the
problem to finding the roots of the quadratic
\begin{eqnarray}
0 & = & (-2Ar^{3}+Cr^{3})a^{2}+(2Ar^{3}+2Br^{3}-2Cr^{3}+6Ar^{2}s-2Br^{2}s-2Cr^{2}s+2Ar^{2}-Cr^{2})a\nonumber \\
 &  & -2Br^{3}-4Ar^{2}s+4Cr^{2}s-4Ars^{2}+2Brs^{2}-2Ar^{2}+2Cr^{2}-2Ars.\label{eq:quada}
\end{eqnarray}
Unfortunately, the discriminant of the remaining quadratic equation
is not a square, as in the case of the previous lemma. However, we
do have that the discriminant is 
\[
\Delta=(2Ar^{3}-2Ar^{2}+Cr^{2}-2Br^{3}-2Cr^{3}+2Br^{2}s-2Ar^{2}s+2Cr^{2}s)^{2}-8BCr^{4}(r+s)^{2}.
\]
Let 
\[
\Delta'=2Ar^{3}-2Ar^{2}+Cr^{2}-2Br^{3}-2Cr^{3}+2Br^{2}s-2Ar^{2}s+2Cr^{2}s.
\]
Notice the roots of \eqref{eq:quada} are of the form $\frac{b\pm\sqrt{\Delta}}{c}$.
Also observe that $(\Delta')^{2}>\Delta$, so $\frac{b+\sqrt{(\Delta')^{2}}}{c}>\frac{b+\sqrt{\Delta}}{c}$
(notice $c=2(C-2A)r^{3}>0$), hence we obtain a bound on the larger
root. It is not immediately obvious which is the larger root since
it is not trivial to know if $\Delta'>0$, thus we must consider both
$\frac{b\pm\Delta'}{c}$, which are
\[
\frac{-4Br^{3}-8Ar^{2}s+4Br^{2}s+4Cr^{2}s-4Ar^{2}+2Cr^{2}}{2(C-2A)r^{3}},\frac{-4Ar^{3}+4Cr^{3}-4Ar^{2}s}{2(C-2A)r^{3}}.
\]
For the first root, notice $\frac{-4Ar^{2}+2Cr^{2}}{2(C-2A)r^{3}}=\frac{1}{r}$
and $\frac{-8Ar^{2}s+4Cr^{2}s}{2(C-2A)r^{3}}=\frac{2s}{r}$ and $\frac{-4Br^{3}+4Br^{2}s}{2(C-2A)r^{3}}=\frac{2B}{C-2A}(\frac{s}{r}-1)$.
This gives
\[
\frac{-4Br^{3}-8Ar^{2}s+4Br^{2}s+4Cr^{2}s-4Ar^{2}+2Cr^{2}}{2(C-2A)r^{3}}=\frac{2s}{r}+\frac{1}{r}+\frac{2B}{(C-2A)}\left(\frac{s}{r}-1\right).
\]
This will be maximized when $\frac{s}{r}$ is maximized, hence is
at most $2+1/r\le2+1/4$ since $r\ge4$.

For the second root, observe
\[
\frac{-4Ar^{3}+4Cr^{3}-4Ar^{2}s}{2(C-2A)r^{3}}=2+\frac{2A}{C-2A}\left(1-\frac{s}{r}\right).
\]
If $t\ge s$, then $A=0$, so the above expression is just $2$. In
the case $t<s$, notice that for fixed $s$ $A$ is decreasing in
$t$, so $\frac{2A}{C-2A}$ is decreasing in $t$ as well. We first
address the case $t=3$. In this case, 
\[
\frac{2A}{C-2A}=\frac{2}{\frac{2s(2s-1)(2s-2}{s(s-1)(s-2)}-2}=\frac{s-2}{3s}=\frac{1}{3}-\frac{2}{s}.
\]
This gives
\[
2+\frac{2A}{C-2A}\left(1-\frac{s}{r}\right)\le2+\left(\frac{1}{3}-\frac{2}{s}\right)\left(1-\frac{s}{r}\right)<2+\frac{1}{3}
\]
for $t\ge3$. 

In the case $t=2$, we do not get the desired bound. This means the
approximation made for the discriminant is too weak in this case,
so we must re-evaluate the roots of the quadratic \eqref{eq:quada}
in $a$, with better approximations. Since $t=2$, we have $A=\frac{s-1}{2(2s-1)}C$
and $B=\frac{C}{2}$. When we make these substitutions, the quadratic
\eqref{eq:quada} simplifies so that it is divisible by $C$. We divide
by $C$, and multiply by $2s-1$ to clear denominators to get the
quadratic
\[
r^{3}sa^{2}+(-r^{3}s-3r^{2}s^{2}-r^{2}s)a-2r^{3}s+6r^{2}s^{2}+r^{3}+r^{2}s-r^{2}+rs=0.
\]
The roots of this quadratic are
\[
\frac{rs+3s^{2}+s\pm\sqrt{9r^{2}s^{2}-18rs^{3}+9s^{4}-4r^{2}s-2rs^{2}+6s^{3}+4rs-3s^{2}}}{2rs}.
\]
Using a computer to maximize these roots, we see that both are at
most $2$. Thus we have bounded all roots by $7/3$, which proves
the lemma. \end{proof}

\subsection{Other Preliminaries}

We must now count $|\S(X)|$.

\begin{lem}\label{l.scount}

Let $X\subset[2,n]$, with $|X|=t$. We have
\[
|\S(X)|=\binom{n-1}{r-1}-\binom{n-1-t}{r-1}.
\]
\end{lem}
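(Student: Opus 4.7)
The plan is to prove this by complementary counting. Since $X \subset [2,n]$, we have $1 \notin X$, so the elements of $\S(X)$ are precisely those $r$-subsets of $[n]$ that contain $1$ and also contain at least one element of $X$. I will count these by subtracting the ``bad'' sets (those containing $1$ but avoiding $X$) from the total number of $r$-subsets containing $1$.

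First I would observe that $|\S| = \binom{n-1}{r-1}$, since an element of $\S$ is determined by choosing its remaining $r-1$ elements from $[2,n]$. Next, an element $A \in \S$ satisfies $A \cap X = \emptyset$ if and only if the $r-1$ elements of $A \setminus \{1\}$ are chosen from $[2,n] \setminus X$, a set of size $n-1-t$. Thus the number of such $A$ is $\binom{n-1-t}{r-1}$. Subtracting yields
\[
|\S(X)| = \binom{n-1}{r-1} - \binom{n-1-t}{r-1},
\]
as claimed.

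There is no real obstacle here; the only thing to be careful about is the hypothesis $X \subset [2,n]$, which ensures $1 \notin X$ so that containing $1$ and intersecting $X$ are independent conditions on the remaining $r-1$ positions. The formula is even valid (vacuously) when $n - 1 - t < r - 1$, since then the binomial coefficient is zero and the identity reduces to $|\S(X)| = |\S|$, reflecting that every element of $\S$ must then meet $X$.
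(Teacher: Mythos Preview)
Your proof is correct and is essentially identical to the paper's own argument: both count $|\S|=\binom{n-1}{r-1}$ and subtract the $\binom{n-1-t}{r-1}$ sets whose remaining $r-1$ elements avoid $X$. Your added remarks about the hypothesis $1\notin X$ and the edge case $n-1-t<r-1$ are sound but not needed for the paper's purposes.
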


\begin{proof}

There are $\binom{n-1}{r-1}$ elements of $\S$, and there are $\binom{n-1-t}{r-1}$
ways to choose $r-1$ elements from $[2,n]\backslash X$. Subtracting
them gives the desired formula.\end{proof}

In what follows, it is simplest to evaluate $|\A_{n,r,s}(X)|/|\S(X)|$
using the fact
\[
\frac{|\A_{n,r,s}(X)|}{|\S(X)|}=\frac{|\A_{n,r,s}(X)|}{\binom{n-2}{r-2}}\cdot\frac{\binom{n-2}{r-2}}{|\S(X)|}.
\]
To evaluate this, the following lemma will be useful.

\begin{lem}\label{l.bincoflim}

For constants $b,c\in\mathbb{N}$, with $b\ge c$ and $n=ar$, we
have 
\[
\lim_{r\rightarrow\infty}\frac{\binom{n-b}{r-c}}{\binom{n-2}{r-2}}=\frac{(a-1)^{b-c}}{a^{b-2}}.
\]

\end{lem}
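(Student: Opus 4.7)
The plan is a direct expansion of the factorials followed by a leading-order asymptotic count. First I would rewrite the ratio as
\[
\frac{\binom{n-b}{r-c}}{\binom{n-2}{r-2}} = \frac{(n-b)!}{(n-2)!}\cdot\frac{(r-2)!}{(r-c)!}\cdot\frac{(n-r)!}{(n-b-r+c)!}.
\]
Each of the three pieces is a falling factorial with a fixed, $r$-independent number of terms, so after substituting $n=ar$ each becomes a polynomial in $r$ of known degree whose leading coefficient is easy to read off.

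Next I would count the factors. The piece $(n-2)!/(n-b)!$ is a product of $b-2$ consecutive integers, each asymptotic to $n=ar$, so it behaves like $(ar)^{b-2}$. The piece $(r-2)!/(r-c)!$ is a product of $c-2$ integers asymptotic to $r$, so it behaves like $r^{c-2}$. The piece $(n-r)!/(n-b-r+c)!$ is a product of $(n-r)-(n-b-r+c)=b-c$ integers, all asymptotic to $n-r=(a-1)r$ (this is where the hypothesis $b\ge c$ enters, ensuring the count is nonnegative), so it behaves like $((a-1)r)^{b-c}$. Substituting yields
\[
\frac{\binom{n-b}{r-c}}{\binom{n-2}{r-2}} \sim \frac{r^{c-2}\cdot((a-1)r)^{b-c}}{(ar)^{b-2}} = \frac{(a-1)^{b-c}}{a^{b-2}},
\]
since the exponents of $r$ cancel: $(c-2)+(b-c)-(b-2)=0$. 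The lower-order corrections to each factor are of relative size $O(1/r)$, and so vanish in the limit.

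There is really no obstacle here beyond careful bookkeeping of how many factors appear in each falling factorial and checking that the $r$-degree balances. The only mild care needed is for small values of $b$ or $c$ (e.g.\ $b=1$ or $c=1$), where one of the three ratios inverts direction; in these cases the identity $N!/(N-k)! = N(N-1)\cdots(N-k+1)$ still applies with the appropriate (possibly ``negative'') number of factors, and the asymptotic count goes through identically. Thus the lemma reduces to a one-line asymptotic, which is the natural statement to record for use in evaluating $|\A_{n,r,s}(X)|/|\S(X)|$ in the sections that follow.
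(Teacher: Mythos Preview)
Your proof is correct and follows essentially the same approach as the paper: both expand the ratio of binomial coefficients into three falling-factorial pieces with $b-2$, $c-2$, and $b-c$ factors respectively, observe that each factor is asymptotic to $n$, $r$, or $n-r$, and read off the limit after the powers of $r$ cancel. Your treatment is slightly more careful in flagging the edge cases $b\le 2$ or $c\le 2$, but otherwise the argument is identical.
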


\begin{proof}

We expand the binomial coefficients. 
\[
\frac{\binom{n-b}{r-c}}{\binom{n-2}{r-2}}=\frac{(r-2)\cdots(r-c+1)\cdot(n-r)\cdots(n-r-b+c+1)}{(n-2)\cdots(n-b+1)}.
\]
For large $r$, the constants are negligible in comparison with $n,r$,
so this is approximately
\[
\frac{r^{c-2}(n-r)^{b-c}}{n^{b-2}}=\frac{(a-1)^{b-c}}{a^{b-2}},
\]
as desired.\end{proof}

\begin{lem}\label{l.lastpart}

For $r$ sufficiently large, we have 
\[
\frac{\binom{n-2}{r-2}}{|\S(X)|}\le\frac{1}{a-a(1-\frac{1}{a})^{t}}.
\]
Furthermore, for fixed $r$, $\binom{n-2}{r-2}/|\S(X)|$ decreases
to $1/t$ as $n\rightarrow\infty$.

\end{lem}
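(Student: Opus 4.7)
My plan is to reduce both assertions to an analysis of $\sum_{k=0}^{t-1}\binom{n-2-k}{r-2}/\binom{n-2}{r-2}$. The starting point is the telescoping identity
\[
|\S(X)| = \binom{n-1}{r-1} - \binom{n-1-t}{r-1} = \sum_{k=0}^{t-1}\binom{n-2-k}{r-2},
\]
obtained by repeated application of Pascal's identity $\binom{m}{r-1} - \binom{m-1}{r-1} = \binom{m-1}{r-2}$. Dividing through gives
\[
\frac{\binom{n-2}{r-2}}{|\S(X)|} = \left(\sum_{k=0}^{t-1}\frac{\binom{n-2-k}{r-2}}{\binom{n-2}{r-2}}\right)^{-1}.
\]

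For the main bound, I would first apply Lemma \ref{l.bincoflim} with $b = 2+k$ and $c = 2$ to get the termwise limit $\binom{n-2-k}{r-2}/\binom{n-2}{r-2} \to ((a-1)/a)^{k}$, and then sum the resulting geometric series to $\sum_{k=0}^{t-1}((a-1)/a)^{k} = a(1 - (1-1/a)^{t})$, which matches the reciprocal of the target bound. To promote this matching limit to the stated inequality for $r$ sufficiently large, I would expand to second order in $1/r$:
\[
\frac{\binom{n-2-k}{r-2}}{\binom{n-2}{r-2}} = \prod_{j=0}^{k-1}\frac{(a-1)r - j}{ar - 2 - j} = \left(\frac{a-1}{a}\right)^{k}\!\left(1 + \frac{k(4a-3-k)}{2a(a-1)\,r} + O(1/r^{2})\right),
\]
and verify that the summed first-order coefficient $P(t) := \sum_{k=0}^{t-1}((a-1)/a)^{k}\, k(4a-3-k)$ is non-negative. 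Its increment $P(t)-P(t-1) = ((a-1)/a)^{t-1}(t-1)(4a-2-t)$ changes sign exactly once, so $P$ is unimodal in $t$; combined with $P(1) = 0$ and the closed-form limit $P(\infty) = 2a(a-1)^{2} > 0$ (obtained from $\sum_{k \ge 0} k\alpha^{k}$ and $\sum_{k \ge 0}k^{2}\alpha^{k}$ with $\alpha = (a-1)/a$), this forces $P(t) \ge 0$ throughout, strictly so for $t \ge 2$; the boundary case $t = 1$ gives equality in the statement.

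For the second assertion, fix $r$ and observe
\[
\frac{\binom{n-2-k}{r-2}}{\binom{n-2}{r-2}} = \prod_{j=0}^{k-1}\frac{n-r-j}{n-2-j}.
\]
Each factor has positive derivative $(r-2)/(n-2-j)^{2}$ in $n$ and tends to $1$ as $n \to \infty$, so the full sum is strictly increasing in $n$ and converges to $t$; taking reciprocals, $\binom{n-2}{r-2}/|\S(X)|$ decreases monotonically to $1/t$. The chief technical point is the sign of $P(t)$: since individual summands $((a-1)/a)^{k}k(4a-3-k)$ are negative once $k > 4a-3$, a clean termwise bound $\binom{n-2-k}{r-2}/\binom{n-2}{r-2}\ge ((a-1)/a)^{k}$ fails once $t$ is appreciably larger than $2a$, and the argument must route through the unimodality of $P$ together with its strictly positive limit.
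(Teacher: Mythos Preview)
Your proof is correct but takes a substantially different route from the paper. The paper rewrites
\[
\frac{\binom{n-2}{r-2}}{|\S(X)|}=\frac{r-1}{(n-1)\Bigl(1-\prod_{j=0}^{t-1}\frac{n-r-j}{n-1-j}\Bigr)}
\]
and then uses the one-line termwise bound $\frac{n-r-j}{n-1-j}\le\frac{n-r}{n-1}$ to replace the product by $(1-x)^{t}$ with $x=\frac{r-1}{n-1}$. This immediately gives the upper bound $\frac{x}{1-(1-x)^{t}}$, a single-variable expression; the first assertion follows since $x\approx 1/a$ (indeed $x<1/a$), and the second by checking that $\frac{x}{1-(1-x)^{t}}$ is increasing in $x$ with limit $1/t$ as $x\to 0$. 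Your telescoping-plus-second-order-expansion route also works, but is considerably heavier: it needs the Taylor expansion, the identification of $P(t)$, and the unimodality-plus-limit argument to pin down the sign of the first-order correction. The paper's approach has the further payoff that the intermediate bound $\frac{r-1}{(n-1)(1-(1-\frac{r-1}{n-1})^{t})}$ is exactly what gets substituted into the later proofs of Theorems~\ref{t.onegen} and~\ref{t.multgen}, so it is not merely a technical device. On the other hand, your treatment of the second assertion---showing each factor $\frac{n-r-j}{n-2-j}$ has positive $n$-derivative and tends to $1$---establishes monotonicity of the \emph{actual} ratio $\binom{n-2}{r-2}/|\S(X)|$ directly, whereas the paper only argues monotonicity of the upper bound $\frac{x}{1-(1-x)^{t}}$.
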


\begin{proof}

We expand to get 
\[
\frac{\binom{n-2}{r-2}}{|\S(X)|}=\frac{(r-1)(n-2)\cdots(n-t)}{(n-1)\cdots(n-t)-(n-r)\cdots(n-t-r+1)}
\]
\[
=\frac{r-1}{(n-1)\left(1-\frac{(n-r)\cdots(n-t-r+1)}{(n-1)\cdots(n-t)}\right)}\le\frac{r-1}{(n-1)\left(1-\left(1-\frac{r-1}{n-1}\right)^{t}\right)}.
\]
Let $x=\frac{r-1}{n-1}$, so the above expression is $\frac{x}{1-(1-x)^{t}}$.
Notice that for large $r$ we have $x\approx1/a$, which proves
part of the lemma. Observe $\frac{x}{1-(1-x)^{t}}\rightarrow\frac{1}{t}$
as $x\rightarrow0$, by L'Hopital's rule. By taking derivatives it
is easy to check that $\frac{x}{1-(1-x)^{t}}$ decreases as $x$ decreases
to $0$. Since $x=\frac{r-1}{n-1}$ is decreasing in $n$, we have
$\frac{\frac{r-1}{n-1}}{1-(1-\frac{r-1}{n-1})^{t}}$ decreases to
$1/t$ as $n$ increases. \end{proof}%

\section{Proof of Theorem \ref{t.onegen} \label{s.onegen}}

We may now complete the proof of Theorem \ref{t.onegen}, which we
restate for easy reference.

\setcounter{count}{4}

\begin{thm} Let $r\ge3$, let $\A=F(r,n,\G)$ be an intersecting
family with $|\G|=1$, and let $X$ be eventually EKR. Then $n>\varphi^{2}r$
implies $|\A(X)|\le|\S(X)|$. \end{thm}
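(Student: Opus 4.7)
The plan follows the outline given at the start of Section \ref{s.onegenprelims}. By Lemma \ref{l.intcond1}, every one-generator intersecting family has the form $\A_{n,r,s}=F(r,n,\{[s,2s-1]\})$ for some $s\in[1,r]$; the case $s=1$ is trivial since $\A_{n,r,1}=\S$, so I may assume $s\ge 2$. By Borg's monotonicity (recorded at the start of Section \ref{s.onegenprelims}), it also suffices to take $X$ to be one of the four $\prec$-minimal representatives $\{r+2\}$, $\{4,r+2\}$, $\{2,4,r+2\}$, or $\{2,\ldots,t,r+2\}$ with $t\ge 4$, one per case of Theorem \ref{t.barber}.

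Lemma \ref{l.countform} decomposes $|\A_{n,r,s}(X)|=D(n,r,s,t)+E(n,r,s,t)$, where $D$ is the sum \eqref{eq:count1}+\eqref{eq:count2} and $E$ is the tail \eqref{eq:count3}, nonzero only when $2s-1\ge r+2$. Two structural inputs collapse the dependence on $s$: Lemma \ref{l.ynofactor} gives that $D$ is decreasing in $s$ whenever $a=n/r>7/3$, and $\varphi^2=(3+\sqrt{5})/2>7/3$; Lemma \ref{l.goestozero} bounds $E/\binom{n-2}{r-2}$ by $3.25\,r^{3/2}(0.954)^r$, which is exponentially negligible. Writing $S_0=\{2\}$ when $t=1$ or $t\ge 4$ and $S_0=\{2,3\}$ when $t\in\{2,3\}$ (in the latter case, the general formula for $D$ does not agree with $|\A_{n,r,2}(X)|$, so the exceptional $s=2$ must be handled via the special formulas of Lemma \ref{l.countform}), I obtain
\[
\frac{|\A_{n,r,s}(X)|}{|\S(X)|}\;\le\;\max_{s_0\in S_0}\frac{|\A_{n,r,s_0}(X)|}{|\S(X)|}\;+\;3.25\,r^{3/2}(0.954)^r\cdot\frac{\binom{n-2}{r-2}}{|\S(X)|}.
\]

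Next, I would compute each $|\A_{n,r,s_0}(X)|/|\S(X)|$ asymptotically by factoring it as $(|\A_{n,r,s_0}(X)|/\binom{n-2}{r-2})\cdot(\binom{n-2}{r-2}/|\S(X)|)$ and applying Lemma \ref{l.bincoflim} to each binomial ratio together with Lemma \ref{l.lastpart} to the second factor. In every case this yields a rational function of $a$ whose numerator-minus-denominator factors cleanly over $\Q[a]$: for $t=1$ and $t=2$ the binding condition is $a\ge 2$, while for the critical cases $t=3$, $s=2$ and $t\ge 4$, $s=2$ the polynomial equals $(a-1)(a^2-3a+1)$, whose positive root $\varphi^2$ appears exactly. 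This matches the tightness example of Section \ref{s.bigbad}.

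The main obstacle is promoting these asymptotic inequalities into strict inequalities valid for every $r\ge 3$. I would retain explicit error terms throughout the binomial approximations (Lemmas \ref{l.bincoflim} and \ref{l.lastpart} are both easy to make quantitative) and combine them with Lemma \ref{l.goestozero} to obtain an effective inequality valid for all $r$ above some explicit $r_0$; the remaining values $3\le r<r_0$ can be verified directly using the generating function of Section \ref{s.genfun}. The delicate point is that in the critical cases the limiting ratio tends to $1$ as $a\downarrow\varphi^2$, so the lower-order corrections must shrink in $r$ rapidly enough to preserve strict inequality at $n=\lceil\varphi^2 r\rceil+1$.
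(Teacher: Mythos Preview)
Your proposal tracks the paper's proof closely: the same reduction to $\A_{n,r,s}$ via Lemma~\ref{l.intcond1}, the same reduction to the four minimal $X$, the same $D+E$ decomposition with Lemma~\ref{l.ynofactor} for monotonicity and Lemma~\ref{l.goestozero} for the exponential tail, and the same endgame of asymptotic estimates plus computer verification for small $r$.

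The one place where your plan diverges is the choice of $S_0$ when $t\ge 4$. You take $S_0=\{2\}$, reasoning that the general formula of Lemma~\ref{l.countform} applies at $s=2$ here, so there is no ``exceptional'' case. But this choice walks you into exactly the boundary problem you flag in your final paragraph: for $t\ge 4$ and $s_0=2$, the ratio $|\A_{n,r,2}(X)|/|\S(X)|$ tends to exactly $1$ at $a=\varphi^2$ (indeed, the paper shows it is governed by the same polynomial as in Section~\ref{s.bigbad}). Your displayed inequality then bounds $|\A_{n,r,s}(X)|/|\S(X)|$ for $s\ge 3$ by a quantity of the form $1+o(1)$ rather than $1-c+o(1)$, so absorbing the exponential error requires extracting a quantitative lower bound on the gap $|\S(X)|-|\A_{n,r,2}(X)|$ at $n=\lfloor\varphi^2 r\rfloor+1$. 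That is doable (the gap is polynomial in $r$ while the error is exponentially small), but it is extra work that your sketch does not carry out.

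The paper avoids this by taking $s_0=3$ for \emph{all} $t\ge 2$, including $t\ge 4$, and treating $s=2$ separately in every such case via the exact polynomial factorization of $|\S(X)|-|\A_{n,r,2}(X)|$ (for $t\ge 4$ this reproduces the computation of Section~\ref{s.bigbad}). With $s_0=3$ the limiting ratio at $a=\varphi^2$ is strictly below $1$, so the exponential error is absorbed by a fixed constant margin and one only needs to locate the threshold $r_0$ numerically. Adjusting your $S_0$ to $\{2,3\}$ for $t\ge 4$ as well would align your sketch with the paper and eliminate the delicacy you identify at the end.
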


\begin{proof}The proof breaks into several cases, depending on $t$.
Each case uses the the same method that we outline, with justification,
in the following paragraph. Having done so we may omit details in
the various cases by referring to the following paragraph.

By Lemma \ref{l.countform}, we have a general formula for $|\A_{n,r,s}(X)|$
for any $s$ and $X\in\{\{r+2\},\{4,r+2\},\{2,4,r+2\},\{2,\ldots,t,r+2\}\}$
that works in all but a few exceptional cases. For each $X$, we begin
by addressing the exceptional cases, which happen when $s=2$ and
$t\ge2$ (although Lemma \ref{l.countform} does not give an exceptional
case in the case $t\ge4$, it is best to address the case $s=2$,
$t\ge4$ separately from the other possible values of $s$ when $t\ge4$).
These will be dealt with on a case by case basis. We will then address
the general formula for $|\A_{n,r,s}(X)|$. We have seen that it breaks
into the sum of two parts. The smaller part, $Q(n,r,s,t)$ of Lemma
\ref{l.goestozero}, is negligible for large $r$. In particular,
by Lemma \ref{l.goestozero} we have $Q(n,r,s,t)/\binom{n-2}{r-2}\le3.25r^{3/2}(.954)^{r}$.
The other term 
\[
D(n,r,s,t)=\sum_{i=s}^{2s-1}\left(\binom{i-1}{s-1}-\binom{i-t}{s-1}\right)\binom{n-i}{r-s}+\sum_{i=s}^{\min\{r+1,2s-1\}}\binom{i-t}{s-1}\binom{n-i-1}{r-s-1}
\]
is decreasing in $s$ by Lemma \ref{l.ynofactor}, so it suffices
to evaluate $D(n,r,s_{0},t)$ where $s_{0}=2$ for $t=1$ and $s_{0}=3$
for $t\ge2$. These values of $s_{0}$ come from the exceptional cases
mentioned above. Thus we have 
\begin{equation}
\frac{|\A_{n,r,s}(X)|}{|\S(X)|}=\frac{|\A_{n,r,s}(X)|}{|\binom{n-2}{r-2}}\cdot\frac{\binom{n-2}{r-2}}{|\S(X)|}\le\left(\frac{D(n,r,s_{0},t)}{\binom{n-2}{r-2}}+3.25r^{3/2}(.954)^{r}\right)\frac{\binom{n-2}{r-2}}{|\S(X)|}\label{eq:genidea}
\end{equation}
for $s\ge s_{0}$. We will then find an $r_{0}$ such that for $r\ge r_{0}$,
we have $n>\varphi^{2}r$ implies that the right hand side of \eqref{eq:genidea}
is less than 1. We will do this by showing \eqref{eq:genidea} is
decreasing in $a:=n/r$, then substituting $a=\varphi^{2}$ and finding
$r_{0}$. We then use a computer to check the same result holds for
$3\le r<r_{0}$. 

We now apply this method for specific values of $s$ and $t$.\setcounter{count}{19}

\paragraph*{Case 1: $t=1$}

As discussed above, we have
\begin{equation}
\frac{|\A_{n,r,s}(X)|}{|\S(X)|}\le\frac{\sum_{i=2}^{3}\binom{i-1}{1}\binom{n-i-1}{r-3}}{\binom{n-2}{r-2}}+3.25r^{3/2}(.954)^{r}.\label{eq:xeq1mess}
\end{equation}
Notice the fraction in \eqref{eq:xeq1mess} is

\[
q(n,r):=\frac{\binom{n-3}{r-3}+2\binom{n-4}{r-3}}{\binom{n-2}{r-2}}=\frac{(r-2)}{(n-2)}+2\frac{(r-2)(n-r)}{(n-2)(n-3)}.
\]
Using the fact that for $b\le c$, we have $\frac{b}{c}\le\frac{b+1}{c+1}$,
we have
\[
\frac{(r-2)}{(n-2)}+2\frac{(r-2)(n-r)}{(n-2)(n-3)}\le\frac{r}{n}+\frac{2r}{n}\left(\frac{n-r+3}{n}\right)=\frac{1}{a}+\frac{2}{a}\left(1-\frac{1}{a}+\frac{3}{ar}\right).
\]
It is easy to check that this is decreasing in $a$, so it suffices
to assume $a=\varphi^{2}$. It is clearly decreasing in $r$, so we
may also assume that $r\ge200$. Substituting $r=200$ and $a=\varphi^{2}$,
we have 
\[
\frac{1}{a}+\frac{2}{a}\left(1-\frac{1}{a}+\frac{3}{ar}\right)\le.859.
\]
Thus $r\ge200$, we have $\frac{|\A_{n,r,s}(X)|}{|\S(X)|}\le.859+3.25r^{3/2}(.954)^{r}$.
Using a computer to evaluate $3.25r^{3/2}(.954)^{r}$, we get that
$\frac{|\A_{n,r,s}(X)|}{|\S(X)|}<1$ for $r_{0}=242$, and hence for
$r\ge242$. 

It remains to check the cases where $r\le241$. Recall
\begin{eqnarray*}
|\A_{n,r,s}(X)| & = & \sum_{i=s}^{2s-1}\left(\binom{i-1}{s-1}-\binom{i-t}{s-1}\right)\binom{n-i}{r-s}+\sum_{i=s}^{\min\{r+1,2s-1\}}\binom{i-t}{s-1}\binom{n-i-1}{r-s-1}\\
 &  & +\binom{r+2-t}{s-1}\binom{n-r-2}{r-s}+\sum_{i=r+3}^{2s-1}\binom{i-t-1}{s-2}\binom{n-i}{r-s}.
\end{eqnarray*}
Although in this case we have $t=1$, we address general $t$ so that
we may use this same argument in the cases that follow. Notice when
$s\ge3$, we have 
\[
\frac{\binom{n-i}{r-s}}{\binom{n-2}{r-2}}=\frac{(r-2)\cdots(r-s+1)}{(n-2)\cdots(n-s+1)}\cdot\frac{(n-i)\cdots(n-i-r+s+1)}{(n-s)\cdots(n-r+1)}\le\frac{(r-2)\cdots(r-s+1)}{(n-2)\cdots(n-s+1)}.
\]
Let $f(n,r,s)=\frac{(r-2)\cdots(r-s+1)}{(n-2)\cdots(n-s+1)}$ and
\begin{eqnarray*}
g(n,r,s) & = & \sum_{i=s}^{2s-1}\left(\binom{i-1}{s-1}-\binom{i-t}{s-1}\right)f(n,r,s)+\sum_{i=s}^{\min\{r+1,2s-1\}}\binom{i-1}{s-1}f(n,r,s-1)\\
 &  & +\binom{r+1}{s-1}f(n,r,s)+\sum_{i=r+3}^{2s-1}\binom{i-2}{s-2}f(n,r,s).
\end{eqnarray*}
This gives $\frac{|\A_{n,r,s}(X)|}{|\S(X)|}\le g(n,r,s)$. Notice
that this holds for all $2\le s\le r$, since the $\binom{r+2-t}{s-1}\binom{n-r-2}{r-s}+\sum_{i=r+3}^{2s-1}\binom{i-t-1}{s-2}\binom{n-i}{r-s}$
terms only appear for $2s-1\ge r+2$. Also, in the case $t=1$, the
first term is $0$ hence there are no such terms with $s=2$. Notice
that $f(n,r,s)$ is decreasing in $n$, so $g(n,r,s)$ is decreasing
in $n$. Thus to check the remaining cases, it suffices to check that
$|\A_{n,r,s}(X)|\le|\S(X)|$ for $3\le r\le241$, $2\le s\le r$,
and $n$ such that $g(n,r,s)>1$. This is not difficult with a computer,
and doing so completes the proof in the case $t=1$. %

\paragraph*{Case 2: $t=2$}

We first address the case $t=2,$ and $s=2$. By Lemma \ref{l.countform},
\[
|\A_{n,r,2}(X)|=2\binom{n-3}{r-3}-\binom{n-4}{r-4}+2\left(2\binom{n-4}{r-3}-\binom{n-5}{r-4}\right).
\]
By expanding the binomials, and pulling out common factors, we get
\[
|\S(X)|-|\A_{ar,r,2}(X)|=\frac{(n-5)!}{(r-2)!(n-r)!}(2n^{3}+(-7r-4)n^{2}+(7r^{2}+14r-4)n-2r^{3}-10r^{2}+4r).
\]
We will find for which $n$ we have $|\S(X)|-|\A_{n,r,2}(X)|\ge0$.
Notice we have the factorization
\[
2n^{3}+(-7r-4)n^{2}+(7r^{2}+14r-4)n-2r^{3}-10r^{2}+4r=(n-r)(2n^{2}+(-5r-4)n+2r^{2}+10r-4),
\]
so to find when $|\S(X)|-|\A_{n,r,2}(X)|\ge0$, we just use the quadratic
formula to get that $|\S(X)|-|\A_{n,r,2}(X)|\ge0$ for
\[
n\ge\frac{5r+4+\sqrt{9r^{2}-40r+48}}{4}.
\]
Notice that for $r\ge2$, $9r^{2}-40r+48\le9r^{2}$, so
\[
\frac{5r+4+\sqrt{9r^{2}-40r+48}}{4}\le\frac{8r+4}{4}=2r+1.
\]
This is less than the bound $n>\varphi^{2}r$.

For $s\ge3$, as discussed at the beginning of the proof, we have
\[
\frac{|\A_{n,r,s}(X)|}{|\S(X)|}\le\left(\frac{\binom{n-3}{r-3}+2\binom{n-4}{r-3}+3\binom{n-5}{r-3}+\binom{n-5}{r-4}+3\binom{n-6}{r-4}}{\binom{n-2}{r-2}}+3.25r^{3/2}(.954)^{r}\right)\frac{\binom{n-2}{r-2}}{|\S(X)|}.
\]
We use the same method as in Case 1. We expand the fraction in the
above expression, we get
\begin{eqnarray*}
q(n,r) & := & \frac{\binom{n-3}{r-3}+2\binom{n-4}{r-3}+3\binom{n-5}{r-3}+\binom{n-5}{r-4}+3\binom{n-6}{r-4}}{\binom{n-2}{r-2}}\\
 & = & \frac{(r-2)}{(n-2)}+\frac{2(r-2)(n-r)}{(n-2)(n-3)}+\frac{3(r-2)(n-r)(n-r-1)}{(n-2)(n-3)(n-4)}\\
 &  & \frac{(r-2)(r-3)(n-r)}{(n-2)(n-3)(n-4)}+\frac{3(r-2)(r-3)(n-r)(n-r-1)}{(n-2)(n-3)(n-4)(n-5)}.
\end{eqnarray*}
As in Case 1, we have
\[
q(n,r)\le\frac{1}{a}+\frac{2}{a}\left(1-\frac{1}{a}+\frac{3}{ar}\right)+\frac{3}{a}\left(1-\frac{1}{a}+\frac{3}{ar}\right)^{2}+\frac{1}{a^{2}}\left(1-\frac{1}{a}+\frac{4}{ar}\right)+\frac{3}{a^{2}}\left(1-\frac{1}{a}+\frac{4}{ar}\right)^{2}.
\]
This is clearly decreasing in $r$, so as above we may assume $r=200$.
Substituting $r=200$, it is easy to check that the expression is
decreasing in $a$, so it is sufficient to assume $a=\varphi^{2}$.
This gives $q(n,r)\le1.573$ for $r\ge200$. 

We now address $\frac{\binom{n-2}{r-2}}{|\S(X)|}$. As seen in the
proof of Lemma \ref{l.lastpart}, we have $\frac{\binom{n-2}{r-2}}{|\S(X)|}\le\frac{r-1}{(n-1)\left(1-\left(1-\frac{r-1}{n-1}\right)^{t}\right)}$,
and the right hand side of this inequality is decreasing in $a$,
so it suffices to assume $a=\varphi^{2}$. Putting all of this together,
it is sufficient to find for which $r$ we have 
\[
\left(1.573+3.25r^{3/2}(.954)^{r}\right)\frac{r-1}{(\varphi^{2}r-1)\left(1-\left(1-\frac{r-1}{\varphi^{2}r-1}\right)^{2}\right)}<1.
\]
The above inequality holds for $r\ge r_{0}=269$. Using the same method
with $g(n,r,s)$ as in Case $1$, we complete the proof in the case
$t=2$.%

\paragraph*{Case 3: $t=3$}

The exceptional case $s=2$ was addressed in Section \ref{s.bigbad}.
For $s\ge3$, we have

\[
\frac{|\A_{n,r,s}(X)|}{|\S(X)|}\le\left(\frac{\binom{n-3}{r-3}+3\binom{n-4}{r-3}+5\binom{n-5}{r-3}+\binom{n-6}{r-4}}{\binom{n-2}{r-2}}+3.25r^{3/2}(.954)^{r}\right)\frac{\binom{n-2}{r-2}}{|\S(X)|}.
\]
We proceed with exactly the same method as in Cases $1$ and $2$.
The computations are very similar, so we omit them. %

\paragraph*{Case 4: $t\ge4$}

We first consider the case $t\ge4$, $s=2$. Notice that for $s=2$,
$|\A_{n,r,2}(\{2,\ldots,t_{1},r+1\})|=|\A_{n,r,2}(\{2,\ldots,t_{2},r+1\})|=\binom{n-2}{r-2}+2\binom{n-3}{r-2}$
for any $t_{1},t_{2}\ge4$. Since $|\S(X)|$ is increasing in $t$,
it suffices to check the case $t=4$. We will consider $|\S(X)|-|\A_{n,r,2}(X)|$.
We will pull out common factors of the binomial coefficients to obtain
a polynomial in $n$ and $r$. By analyzing this polynomial, we will
find a bound on $n$ which implies $|\S(X)|\ge|\A_{n,r,2}(X)|$. Although
we have already established that $|\S(X)|=\binom{n-1}{r-1}-\binom{n-5}{r-1}$,
it will prove simplest to use a different formula. We use the principle
of inclusion-exclusion to find that $|\S(X)|=4\binom{n-2}{r-2}-6\binom{n-3}{r-3}+4\binom{n-4}{r-4}-\binom{n-5}{r-5}$.
This gives
\[
|\S(X)|-|\A_{n,r,2}(X)|=\frac{(n-5)!}{(r-2)!(n-r)!}(n^{3}+(-4r-1)n^{2}+(4r^{2}+8r-6)n-r^{3}-7r^{2}+6r),
\]
which factors as
\[
\frac{(n-5)!}{(r-2)!(n-r)!}(n-r)\left(n-\frac{3r+1+\sqrt{5r^{2}-22r+25}}{2}\right)\left(n-\frac{3r+1-\sqrt{5r^{2}-22r+25}}{2}\right).
\]
This is the same factorization as seen in Section \ref{s.bigbad},
so we once again obtain the desired bound $n\ge\varphi^{2}r$ implies
$|\A_{n,r,2}(X)|\le|\S(X)|$. 

We must now address the case $t\ge4$ and $s\ge3$. Similar to the
case $s=2$, we have $D(n,r,3,t_{1})=D(n,r,3,t_{2})=\binom{n-3}{r-3}+3\binom{n-4}{r-3}+6\binom{n-5}{r-3}$
for any $t_{1},t_{2}\ge4$, so it suffices to check the case $t=4$.
Thus
\[
\frac{|\A_{n,r,s}(X)|}{|\S(X)|}\le\left(\frac{\binom{n-3}{r-3}+3\binom{n-4}{r-3}+6\binom{n-5}{r-3}}{\binom{n-2}{r-2}}+3.25r^{3/2}(.954)^{r}\right)\left(\frac{r-1}{(n-1)\left(1-(1-\frac{r-1}{n-1})^{4}\right)}\right).
\]
We use the same method demonstrated in Cases 1 and 2 to complete the
proof of Theorem \ref{t.onegen}. The details are similar to the computations
above, so they have been omitted. \end{proof} %
\section{Proof of Theorem \ref{t.multgen} \label{s.multgen}}

In this section, we prove Theorem \ref{t.multgen}. Before the proof,
we need the following lemma.

\begin{prop}\label{l.nicegens}

Let 
\[
\B=F(r,n,\{\{1,r+1\}\})\cup F(r,n,\{\{2,3,r+2\}\})\cup\bigcup_{s=3}^{r}\A_{n,r,s}.
\]
Let $\A$ be an intersecting family. If $\A\not\subset\S$ and $\A\not\subset\A_{n,r,2}$,
then $\A\subset\B$.

\end{prop}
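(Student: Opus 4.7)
The plan is to pick an arbitrary $A=\{a_1,\ldots,a_r\}\in\A$ and show directly that $A$ lies in one of the three pieces of $\B$, splitting into cases according to $a_1$. Throughout I use the (compressed) intersecting assumption on $\A$ together with the cross-intersecting criterion of Lemma \ref{l.intcond2} (a consequence of Proposition \ref{p.intcond}), applied to $A$ paired with a carefully chosen witness in $\A$ supplied by the two exclusion hypotheses.

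Suppose first that $a_1=1$. The hypothesis $\A\not\subset\S$ gives some $A'\in\A$ with $1\notin A'$, so $a_i'\ge i+1$ for all $i$, and compression shows that $\{2,3,\ldots,r+1\}\le A'$ also lies in $\A$. Applying Lemma \ref{l.intcond2} to the pair $(A,\{2,3,\ldots,r+1\})$, some $(i,j)$ satisfies $i+j>\max\{a_i,j+1\}$; the second inequality forces $i\ge 2$, and combining $i+j>a_i$ with $a_i\ge a_2+(i-2)$ yields $a_2\le j+1\le r+1$. Hence $A\prec\{1,r+1\}$ and $A\in F(r,n,\{\{1,r+1\}\})\subset\B$.

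Suppose instead $a_1\ge 2$. By compression the subfamily $\{C:C\le A\}\subset\A$ is intersecting, so Lemma \ref{l.intcond1} produces some $s$ with $A\prec[s,2s-1]$; the assumption $a_1\ge 2$ rules out $s=1$. If $s\ge 3$ then $A\in\A_{n,r,s}\subset\B$ directly. If $s=2$ then $a_1=2$ and $a_2=3$, so I invoke $\A\not\subset\A_{n,r,2}$: there is $B\in\A$ with $b_2\ge 4$, and compression replaces $B$ by $\{1,4,5,\ldots,r+2\}\in\A$. Now Lemma \ref{l.intcond2} applied to $(A,B)$ gives $(i,j)$ with $i+j>\max\{a_i,b_j\}$; since $b_1=1$ and $b_j=j+2$ for $j\ge 2$, while $a_i\ge i+1$ throughout, a short case check rules out $j=1$ and $i\le 2$, forcing $i\ge 3$ and $j\ge 2$. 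Then $a_i\ge a_3+(i-3)$ gives $a_3\le j+2\le r+2$, so $A\prec\{2,3,r+2\}$ and $A\in F(r,n,\{\{2,3,r+2\}\})\subset\B$.

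There is no serious obstacle; the argument is mechanical casework once the two compression reductions are in hand. The one delicate step is choosing the concrete minimal witnesses $\{2,3,\ldots,r+1\}$ and $\{1,4,5,\ldots,r+2\}$ precisely so that the system $i+j>\max\{a_i,b_j\}$ from Lemma \ref{l.intcond2} collapses cleanly to the required bounds $a_2\le r+1$ and $a_3\le r+2$ rather than to a weaker inequality on some later coordinate of $A$.
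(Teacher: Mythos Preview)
Your proof is correct and follows essentially the same approach as the paper's: both split on $a_1$, invoke the self-intersecting criterion (Lemma~\ref{l.intcond1}) to place $A$ under some $[s,2s-1]$, and in the two boundary cases use the cross-intersecting criterion against a witness coming from $\A\not\subset\S$ or $\A\not\subset\A_{n,r,2}$. The only cosmetic difference is that you first pass via compression to the explicit minimal witnesses $\{2,\ldots,r+1\}$ and $\{1,4,\ldots,r+2\}$ and argue directly for the bounds $a_2\le r+1$ and $a_3\le r+2$, whereas the paper keeps a generic witness with $b_1\ge2$ (resp.\ $b_2\ge4$) and derives the same bounds by contradiction.
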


\begin{proof}

Consider an element $A=\{a_{1},\ldots,a_{r}\}\subset\A$. We begin
by assuming $a_{1}=1$, and we consider the maximal possible value
for $a_{2}$. For the sake of contradiction, assume that $a_{2}\ge r+2$.
Since $a_{i}>a_{i-1}$, we have $a_{i}\ge r+i$ for $i\ge2$. Since
$\A\not\subset\S$, there exists some $B=\{b_{1},\ldots,b_{r}\}\subset\A$
such that $b_{1}\ge2$. By Proposition \ref{p.intcond}, there exists
a pair $i,j$ such that $i+j>\max\{a_{i},b_{j}\}$. If $i=1$, then
$1+j>b_{j}$, which implies $b_{j}=j$, since for any $B\subset\binom{[n]}{r}$,
we have $b_{j}\ge j$. This implies $b_{1}=1$, which is a contradiction.
So $i\ge2$ and we have $i+j>a_{i}\ge r+i$, hence $j>r$, which is
impossible. Thus we cannot have $a_{2}\ge r+2$, so all $A\subset\A$
with smallest element $1$ are contained in $F(n,r,\{\{1,r+1\}\})$.

Now assume that $A\prec[s,2s-1]$ for some $s\ge2$. If $s\ge3$,
then $A\subset\A_{n,r,s}\subset\B$. Thus we may assume $s=2$. If
$a_{1}=1$, we know $A\subset\B$ by the previous paragraph, so we
may assume $a_{1}=2$, which implies $a_{2}=3$. To show $A\subset F(r,n,\{\{2,3,r+2\}\}$,
we argue as in the previous paragraph. If $a_{3}\ge r+3$, then $a_{i}\ge r+i$
for $i\ge3$. Since $\A\not\subset\A_{n,r,2}$, there exists some
$B\subset\A$ with $B\not\prec[2,3]$. This implies $b_{2}\ge4$.
By Proposition \ref{p.intcond}, there exists a pair $i,j$ with $i+j>\max\{a_{i},b_{j}\}$.
As in the previous paragraph, if $i\ge3$, then $j>r$ which is impossible,
so $i\in\{1,2\}$. 

We first show we cannot have $i=1$. If $i=1$, then we cannot have
$j=1$ since $a_{2}=2$. If $j\ge2$, then notice that since $i=1$,
we have $1+j>b_{j}$, but we always have $b_{j}\ge j$, so $b_{j}=j$.
Since $b_{j}<b_{j+1}$, this implies $b_{2}=2$, which contradicts
$b_{2}\ge4$. Thus $i\ne1$. Now for a contradiction assume $i=2$.
If $j=1$, then we have $3>a_{2}=3$ which is impossible, hence $j\ge2$.
However, if $j\ge2$, then we have $2+j>b_{j}\ge j$, so $b_{j}\in\{j,j+1\}$.
Since $b_{j-1}\le b_{j}-1$, this implies $b_{2}\in\{2,3\}$, which
is false. Thus $i\not\in\{1,2\}$. This is impossible if $a_{3}\ge r+3$,
so we have $a_{3}\le r+2$, so $A\subset F(r,n,\{2,3,r+2\})$.\end{proof}

We restate Theorem \ref{t.multgen} for easy reference.

\setcounter{count}{3}

\begin{thm} Let $r\ge3$ and let $X$ be eventually EKR with $|X|\le r$.
For each $\varepsilon>0$, there exists an $r_{0}$ such that for
$r>r_{0}$, the condition $n>(2+\varepsilon)r^{2}$ implies $X$ is
EKR. Furthermore, 

\begin{enumerate}
\item If $|X|=1$ and $r\ge 12$, then $n>2r^2$ implies $X$ is EKR.
\item If $|X|=2$ and $r\ge 14$, then $n>2r^2$ implies $X$ is EKR.
\item If $|X|=3$ and $r\ge 14$, then $n>3r^2$ implies $X$ is EKR.
\item If $|X|=t\ge 4$ and $r\ge \max\{11,t\}$, then $n>tr^2$ implies $X$ is EKR.
\end{enumerate}
\end{thm}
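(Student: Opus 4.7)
The plan is to use Proposition \ref{l.nicegens} to reduce to a union bound over a bounded family of single-generator pieces, and then apply techniques analogous to Theorem \ref{t.onegen} to each piece. Since every maximal compressed intersecting family has the form $F(r,n,\G)$, it suffices to consider such $\A$. The easy cases are $\A\subset\S$ (handled by Theorem \ref{ekr}) and $\A\subset\A_{n,r,2}$ (handled by Theorem \ref{t.onegen}, since $\varphi^{2}r\le 2r^{2}$ for $r\ge 2$). Otherwise Proposition \ref{l.nicegens} gives $\A\subset\B$, so the union bound yields
\[
|\A(X)|\le\bigl|F(r,n,\{\{1,r+1\}\})(X)\bigr|+\bigl|F(r,n,\{\{2,3,r+2\}\})(X)\bigr|+\sum_{s=3}^{r}|\A_{n,r,s}(X)|.
\]

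The sum over $s$ is the most substantial term, and is precisely the source of the extra factor of $r$ that distinguishes Theorem \ref{t.multgen} from Theorem \ref{t.onegen}. I would decompose $|\A_{n,r,s}(X)|=D(n,r,s,t)+E(n,r,s,t)$ as in Lemma \ref{l.countform}, where $D$ denotes the terms \eqref{eq:count1}+\eqref{eq:count2} and $E$ denotes \eqref{eq:count3}. Since $D$ is decreasing in $s$ once $n/r>7/3$ (Lemma \ref{l.ynofactor}), we obtain $\sum_{s=3}^{r}D(n,r,s,t)\le(r-2)D(n,r,3,t)$, while Lemma \ref{l.goestozero} shows that $\sum_{s=3}^{r}E(n,r,s,t)\le 3.25\,r^{5/2}(0.954)^{r}\binom{n-2}{r-2}$, which is exponentially negligible. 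The two extra single-generator pieces can be counted by the methods of Lemma \ref{l.countform}, and their ratios with $\binom{n-2}{r-2}$ are $O(1/a)$ with $a=n/r$.

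Assembling these estimates and invoking Lemma \ref{l.bincoflim} to take $r\to\infty$ with $a=n/r$ fixed, the ratio $D(n,r,3,t)/\binom{n-2}{r-2}$ tends to an explicit $\kappa(t)/a$ coming from the surviving binomial sums in Lemma \ref{l.countform}, while $\binom{n-2}{r-2}/|\S(X)|$ is controlled by Lemma \ref{l.lastpart}. The overall ratio $|\A(X)|/|\S(X)|$ is then of order $r^{2}/n$ times a $t$-dependent constant, and requiring this to be less than $1$ gives the threshold $n>c(t)r^{2}$. The precise constants $c(1)=c(2)=2$, $c(3)=3$, and $c(t)=t$ for $t\ge 4$ arise from direct case-by-case evaluation; the behavior changes between $t=2$ and $t=3$, and then saturates at $t=4$, because the coefficient $\binom{i-t}{s-1}$ in $D(n,r,3,t)$ begins to vanish for more values of $i$ as $t$ increases.

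The main obstacle will be establishing the explicit lower bounds $r\ge 11,12,14$ in the four stated cases. Following the strategy of Theorem \ref{t.onegen}, I would first verify the asymptotic inequality in the limit $r\to\infty$ under $n>c(t)r^{2}$, producing an $r_{0}$ for each $\varepsilon$, and then perform a finite computer verification for smaller $r$ using the exact formulas of Lemma \ref{l.countform}. A secondary obstacle is careful bookkeeping for the two extra generators $\{1,r+1\}$ and $\{2,3,r+2\}$, whose contributions are not absorbed by the monotonicity argument for the $\A_{n,r,s}$ family and must be controlled by an ad hoc computation analogous to the exceptional cases treated at the start of each case in the proof of Theorem \ref{t.onegen}, so as not to spoil the sharp constants.
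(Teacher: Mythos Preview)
Your overall plan matches the paper's: reduce via Proposition~\ref{l.nicegens} to bounding $|\B(X)|$, split $\B$ into the three pieces $\C_1=F(r,n,\{\{1,r+1\}\})$, $\C_2=F(r,n,\{\{2,3,r+2\}\})$, $\C_3=\bigcup_{s\ge 3}\A_{n,r,s}$, and control $\C_3$ via the decomposition $D+E$ together with the monotonicity of $D$ in $s$. However, two quantitative claims in your sketch are off in ways that would prevent you from reaching the stated constants, and in fact even the asymptotic $(2+\varepsilon)r^2$.

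First, the two extra pieces are not $O(1/a)$ relative to $\binom{n-2}{r-2}$; they are $O(r/a)$ plus, for $t\ge 2$, a term of order~$1$. For instance $|\C_1(X)|$ involves a sum of roughly $r$ binomial coefficients each comparable to $(1/a)\binom{n-2}{r-2}$, and for $t\ge 2$ the count of sets containing some fixed $x\in X\setminus\{r+2\}$ contributes terms like $\binom{n-4}{r-2}\sim\binom{n-2}{r-2}$. With $a=cr$ these give $1/c$ and a constant, and in the paper's analysis they are actually the \emph{dominant} terms that determine the threshold, not subordinate to $\C_3$.

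Second, and this is the essential gap, the crude bound $\sum_{s=3}^{r}D(n,r,s,t)\le (r-2)\,D(n,r,3,t)$ is too weak. For $t\ge 2$ the leading part of $D(n,r,3,t)/\binom{n-2}{r-2}$ is of order $1/a$ (coming from $\binom{n-3}{r-3}$ and its neighbours), so $(r-2)\,D(n,r,3,t)/\binom{n-2}{r-2}\to \kappa(t)/c$ with $\kappa(t)\in\{6,9,10\}$ when $a=cr$; combined with the $\C_1,\C_2$ contributions this forces $c$ as large as about $12$, not $2+\varepsilon$. The paper instead uses the two-step bound
\[
\sum_{s=3}^{r}D(n,r,s,t)\;\le\;D(n,r,3,t)+(r-3)\,D(n,r,4,t).
\]
Since $D(n,r,4,t)/\binom{n-2}{r-2}=O(1/a^2)$, the $\C_3$ contribution then tends to $0$ under $a=cr$, and the threshold is governed entirely by $\C_1$ and $\C_2$, yielding $(2+\varepsilon)$ and the specific constants in items~1--4. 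The paper explicitly flags this refinement (see the remark at the end of Case~1 in the proof), noting that the one-term bound happens to suffice only for $t=1$.
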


\setcounter{count}{20}
\begin{proof}

The proof breaks into several cases, depending on $|X|=t$. In each
case, we use the bound $|\A(X)|\le|\B(X)|$, which follows immediately
from Lemma \ref{l.nicegens}. We will count $|\B(X)|$ by considering
each of $F(r,n,\{\{1,r+1\}\})$, $F(r,n,\{\{2,3,r+2\}\})$, and $\bigcup_{s=3}^{r}\A_{n,r,s}$
separately, and then we consider the ratio $\frac{|\B(X)|}{|\S(X)|}$.
Using the preliminaries derived in Section \ref{s.onegenprelims},
we find how large $n$ must be for this ratio to be less than 1. As
in the proof of Theorem \ref{t.onegen}, we will define $a=n/r$.

\paragraph{Case 1: $t=1$}

We begin by considering $\frac{|\C(X)|}{\binom{n-2}{r-2}}$ for various
subfamilies $\C\subset\B.$ We will then combine the bounds to bound
$|\B(X)|$.

Consider the family $\C_{1}=F(r,n,\{1,r+1\})$. By summing over possible
values for the second element $i$ of a set, $2\le i\le r+2$, and
choosing the $r-3$ possible remaining elements, we see 
\[
|\C_{1}(X)|=\sum_{i=2}^{r+1}\binom{n-i-1}{r-3}.
\]
Notice that $\frac{r-2}{n-2}\le\frac{r}{n}=\frac{1}{a}$, and $\frac{n-i-1-j}{n-3-j}\le1$
for $0\le j\le r-4$, so
\[
\frac{\binom{n-i-1}{r-3}}{\binom{n-2}{r-2}}=\frac{r-2}{n-2}\cdot\frac{(n-i-1)(n-i-2)\cdots(n-i-r+3)}{(n-3)(n-4)\cdots(n-r+1)}\le\frac{1}{a}.
\]
This gives 
\[
\frac{|\C_{1}(X)|}{\binom{n-2}{r-2}}\le\frac{r}{a}.
\]

We now consider $\C_{2}=F(r,n,\{\{2,3,r+2\}\})$, and we count the
number of elements of $\C_{2}(X)$ that are not in $\C_{1}(X)$. All
elements of $\C_{2}(X)$ that have smallest element $1$ have already
been counted in $\C_{1}(X)$, so we just need to count the number
of elements of $\C_{2}(X)$ with first two elements $2,3$. We sum
over the possible values $i$ for the third element, $3\le i\le r+1$,
and then we also account for elements of $\C_{2}(X)$ with first three
elements $2,3,r+2$. The number of these elements is
\[
\sum_{i=3}^{r+1}\binom{n-i-1}{r-4}+\binom{n-r-2}{r-3}.
\]
Using the same method used above, we have ${\displaystyle \frac{\binom{n-i-1}{r-4}}{\binom{n-2}{r-2}}\le\frac{1}{a^{2}}}$
and ${\displaystyle \frac{\binom{n-r-2}{r-3}}{\binom{n-2}{r-2}}\le\frac{1}{a}}$,
hence
\[
\frac{\sum_{i=3}^{r+1}\binom{n-i-1}{r-4}+\binom{n-r-2}{r-3}}{\binom{n-2}{r-2}}\le\frac{r-1}{a^{2}}+\frac{1}{a}.
\]
Combining this with the bound on $|\C_{1}(X)|$, we have
\[
\frac{|(\C_{1}\cup\C_{2})(X)|}{\binom{n-2}{r-2}}\le\frac{r}{a}+\frac{r-1}{a^{2}}+\frac{1}{a}.
\]

We now consider $\C_{3}=\bigcup_{s=3}^{r}\A_{n,r,s}$. We consider
each $\A_{n,r,s}(X)$ separately, starting with the part of of $|\A_{n,r,s}(X)|$
that goes to zero exponentially in $r$. Let
\[
E(n,r)=(r-3)\cdot\frac{(r-1)a(ar-1)e^{4}}{2(r+3)^{2}(2\pi)^{2}}\cdot\sqrt{\frac{(r+3)(ar-r-3)(ar-r)}{2\pi(r-1)(ar-r-4)a}}\cdot\left(\frac{2\sqrt{2}(a-1)^{2a-2}}{(a-\frac{3}{2})^{a-\frac{3}{2}}a^{a}}\right)^{r}.
\]
Notice that this is $(r-3)T(ar,r,r,r/2)$ from \eqref{eq:exppart}
in the proof of Lemma \ref{l.goestozero}. Recall that $s=r/2$ and
$i=r$ maximizes the term. The factor of $r-3$ comes from the fact
that there are at most $r-3$ terms in the sum $\sum_{i=r+3}^{2s-1}\binom{i-t-1}{s-2}\binom{n-i}{r-s}$
from Lemma \ref{l.goestozero}. We now consider the remaining part
of $\A_{n,r,s}(X)$, namely ${\displaystyle D(n,r,s,t)=\sum_{i=s}^{2s-1}\left(\binom{i-1}{s-1}-\binom{i-t}{s-1}\right)\binom{n-i}{r-s}+\sum_{i=s}^{\min\{r+1,2s-1\}}\binom{i-t}{s-1}\binom{n-i-1}{r-s-1}}$.
From Lemmas \ref{l.goestozero}, \ref{l.ynofactor}, we know for $s\ge4$,
\[
\frac{|\A_{n,r,s}(X)|}{\binom{n-2}{r-2}}\le\frac{D(n,r,4,1)}{\binom{n-2}{r-2}}+E(n,r).
\]
Thus 
\[
\frac{|\C_{3}(X)|}{\binom{n-2}{r-2}}\le\sum_{s=3}^{r}\frac{|\A_{n,r,s}(X)|}{\binom{n-2}{r-2}}\le(r-2)E(n,r)+\frac{D(n,r,3,1)+(r-3)D(n,r,4,1)}{\binom{n-2}{r-2}}.
\]
Notice that by considering $D(n,r,3,1)+(r-3)D(n,r,4,1)$ instead of
$(r-2)D(n,r,3,1)$, we obtain a better approximation. We could have
considered higher terms, but this does not improve the asymptotic
bounds, and leads to a more complicated expression. Combining the
bounds obtained by considering $\C_{1}(X)$, $\C_{2}(X)$, and $\C_{3}(X)$,
and substituting for $D(n,r,s,t)$, we have 
\[
\frac{|\B(X)|}{\binom{n-2}{r-2}}\le|\C_{1}(X)\cup\C_{2}(X)|+|\C_{3}(X)|\le\frac{r}{a}+\frac{r-1}{a^{2}}+\frac{1}{a}+(r-2)E(n,r)+D(n,r,3,1)+(r-3)D(n,r,4,1).
\]
Expanding $D(n,r,3,1)$ and $D(n,r,4,1)$, we obtain
\begin{eqnarray}
\frac{|\B(X)|}{\binom{n-2}{r-2}} & \le & \frac{r}{a}+\frac{r-1}{a^{2}}+\frac{1}{a}+(r-2)E(n,r)\label{eq:BXt1}\\
 &  & +\frac{\binom{n-4}{r-4}+3\binom{n-5}{r-4}+6\binom{n-6}{r-4}+(r-3)\left(\binom{n-5}{r-5}+4\binom{n-6}{r-5}+10\binom{n-7}{r-5}+20\binom{n-8}{r-5}\right)}{\binom{n-2}{r-2}}.\nonumber 
\end{eqnarray}
As $r$ gets large, the right hand side of \eqref{eq:BXt1} goes to
\[
\frac{r}{a}+\frac{r-1}{a^{2}}+\frac{1}{a}+\frac{1}{a^{2}}+3\frac{(a-1)}{a^{3}}+6\frac{(a-1)^{2}}{a^{4}}+(r-3)\left(\frac{1}{a^{3}}+4\frac{(a-1)}{a^{4}}+10\frac{(a-1)^{2}}{a^{5}}+20\frac{(a-1)^{3}}{a^{5}}\right).
\]

If we take $a=cr$ for some constant $c$, by ignoring terms that
go to $0$ as $r$ grows, this simplifies to $1/c$, which comes from
the $r/a$ term. Thus for any $\varepsilon>0$, for sufficiently large
$r$, we have $X$ is EKR for $n>(1+\varepsilon)r^{2}$. 

For example, if $\varepsilon=.1$, then by substituting $a=1.1r$
into the right hand side of \eqref{eq:BXt1} and using a computer
to check when this is less than $1$, we have that $X$ is EKR for
$n>1.1r^{2}$ for $r\ge28$. If we increase $\varepsilon$ to $\varepsilon=1$,
then we get $n>2r^{2}$ implies $X$ is EKR for $r\ge12$. Note that
the coefficient $(1+\varepsilon)$ differs from the coefficient $(2+\varepsilon)$
in Theorem \ref{t.multgen}. We will see that the cases $t\ge3$ determine
the coefficient $(2+\varepsilon)$. Also, note that the choices $\varepsilon=.1$
and $\varepsilon=1$ were arbitrary. These examples were computed
to give a bound on $n$ that is not dependent on $r$ being arbitrarily
large.

We remark that in the case $t=1$, it would have been sufficient to
simply consider $(r-2)D(n,r,3,1)$ instead of $D(n,r,3,1)+(r-3)D(n,r,4,1)$.
This is because the largest binomial coefficient in $D(n,r,3,1)$
is $\binom{n-4}{r-4}$, and $(r-2)\binom{n-4}{r-4}/\binom{n-2}{r-2}$
is approximately $r/a^{2}$ for large $r$. However, in this case
we establish a pattern that will repeated in other cases, in which
it will be necessary to consider $D(n,r,3,1)+(r-3)D(n,r,4,1)$.

\paragraph{Case 2: $t=2$}

We repeat a similar computation to the case $t=1$. Consider the family
$\C_{1}=F(r,n,\{1,r+1\})$. We begin by counting the the number of
elements of $\C_{1}$ containing $r+2$, which is $\sum_{i=2}^{r+1}\binom{n-i-1}{r-3}$
as in Case $1$. We also count the number of elements containing $4$.
If an element of $\C_{1}$ contains 4, the first elements must be
$\{1,2,3,4\}$, $\{1,2,4\}$, $\{1,3,4\}$, or $\{1,4\}$. There are,
respectively, $\binom{n-4}{r-4}$, $\binom{n-4}{r-3}$, $\binom{n-4}{r-3}$,
$\binom{n-4}{r-2}$ elements of $\C$ of each form. We have double
counted some elements, but a simpler bound suffices for our purposes.
This gives
\[
|\C_{1}(X)|\le\sum_{i=2}^{r+1}\binom{n-i-1}{r-3}+\binom{n-4}{r-4}+2\binom{n-4}{r-3}+\binom{n-4}{r-2},
\]
hence
\[
\frac{|\C_{1}(X)|}{\binom{n-2}{r-2}}\le\frac{r}{a}+\frac{\binom{n-4}{r-4}+2\binom{n-4}{r-3}+\binom{n-4}{r-2}}{\binom{n-2}{r-2}}.
\]

We now consider $\C_{2}=F(r,n,\{\{2,3,r+2\}\})$, and we count the
number of elements of $\C_{2}(X)$ that are not in $\C_{1}(X)$. As
before, we only need to count the elements with first two elements
$\{2,3\}$. There are $\binom{n-4}{r-3}$ elements with first three
elements $\{2,3,4\}$, and $\sum_{i=5}^{r+1}\binom{n-i-1}{r-4}+\binom{n-r-2}{r-3}$
that contain $r+2$ but not $4$, thus $\C_{2}(X)$ contributes
\[
\sum_{i=5}^{r+2}\binom{n-i-1}{r-4}+\binom{n-4}{r-3}+\binom{n-r-2}{r-3}
\]
elements not already counted in $\C_{1}(X)$. 

We now consider $\C_{3}=\bigcup_{s=3}^{r}\A_{n,r,s}$. We have
\[
\frac{|\C_{3}(X)|}{\binom{n-2}{r-2}}\le(r-2)E(n,r)+\frac{D(n,r,3,2)+(r-3)D(n,r,4,2)}{\binom{n-2}{r-2}}.
\]
Notice $\frac{|\B(X)|}{|\S(X)|}=\frac{|\B(X)|}{|\binom{n-2}{r-2}}\frac{\binom{n-2}{r-2}}{|\S(X)|}$.
As seen in Lemma \ref{l.lastpart}, we have $\frac{\binom{n-2}{r-2}}{|\S(X)|}\le\frac{\frac{r-1}{n-1}}{1-(1-\frac{r-1}{n-1})^{2}}$.
Combining the bounds obtained by considering $\C_{1}(X)$, $\C_{2}(X)$,
and $\C_{3}(X)$, we have
\begin{eqnarray}
\frac{|\B(X)|}{|\S(X)|} & \le & \frac{\frac{r-1}{n-1}}{1-\left(1-\frac{r-1}{n-1}\right)^{2}}\left(\frac{r}{a}+\frac{\binom{n-4}{r-4}+2\binom{n-4}{r-3}+\binom{n-4}{r-2}}{\binom{n-2}{r-2}}+\frac{r-1}{a^{2}}+\frac{\binom{n-4}{r-3}+\binom{n-r-2}{r-3}}{\binom{n-2}{r-2}}\right.\nonumber \\
 &  & +(r-2)E(n,r)+\frac{\binom{n-3}{r-3}+2\binom{n-4}{r-3}+3\binom{n-5}{r-3}+\binom{n-5}{r-4}+3\binom{n-6}{r-4}}{\binom{n-2}{r-2}}\label{eq:BXt2}\\
 &  & \left.(r-3)\left(\frac{\binom{n-4}{r-4}+3\binom{n-5}{r-4}+6\binom{n-6}{r-4}+10\binom{n-7}{r-4}+\binom{n-6}{r-5}+4\binom{n-7}{r-5}+10\binom{n-8}{r-5}}{\binom{n-2}{r-2}}\right)\right).\nonumber 
\end{eqnarray}
Using the same method demonstrated in Case $1$ of considering large
$r$ and approximating, then substituting $a=cr$, we may bound this
for large $r$ by $\frac{1}{2}\left(\frac{1}{c}+1\right).$ Thus for
any $\varepsilon>0$, for sufficiently large $r$, we have $X$ is
EKR for $n>(1+\varepsilon)r^{2}$. For example, using \eqref{eq:BXt2},
we have for $\varepsilon=.1$, we have $r\ge268$, $n>1.1r^{2}$ implies
$X$ is EKR. For $\varepsilon=1$, we have $n>2r^{2}$ implies $X$
is EKR for $r\ge14$.

\paragraph{Case 3: $t=3$}

We repeat a similar computation to the case $t=1$. Consider the family
$\C_{1}=F(r,n,\{1,r+1\})$. We begin by counting the the number of
elements of $\C_{1}$ containing $r+2$, which is $\sum_{i=2}^{r+1}\binom{n-i-1}{r-3}$
as in Case $1$. We also count the number of elements containing $2,4$.
If an element of $\C_{1}$ contains $2$ or 4, the first elements
must be $\{1,2\}$, $\{1,3,4\}$, or $\{1,4\}$. There are, respectively,
$\binom{n-2}{r-2}$, $\binom{n-4}{r-3}$, and $\binom{n-4}{r-2}$
elements of $\C$ of each form. We have double counted some elements,
but this is sufficient for our purposes. This gives
\[
|\C_{1}(X)|\le\sum_{i=2}^{r+1}\binom{n-i-1}{r-3}+\binom{n-2}{r-2}+\binom{n-4}{r-3}+\binom{n-4}{r-2},
\]
hence 
\[
\frac{|\C_{1}(X)|}{\binom{n-2}{r-2}}\le\frac{r}{a}+\frac{\binom{n-2}{r-2}+\binom{n-4}{r-3}+\binom{n-4}{r-2}}{\binom{n-2}{r-2}}.
\]

We now consider $\C_{2}=F(r,n,\{\{2,3,r+2\}\})$, and we count the
number of elements of $\C_{2}(X)$ that are not in $\C_{1}(X)$. As
before, we only need to count the elements with first two elements
$\{2,3\}$. Everything with first element $2$ is contained in $\C_{2}(X)$,
which adds an extra
\[
\sum_{i=4}^{r+2}\binom{n-i}{r-3}
\]
elements. Notice that $\binom{n-i}{r-3}\le1/a$ as seen in Case $1$,
hence $\frac{\sum_{i=4}^{r+2}\binom{n-i}{r-3}}{\binom{n-2}{r-2}}\le\frac{r-1}{a}$. 

We now consider $\C_{3}=\bigcup_{s=3}^{r}\A_{n,r,s}$. As above, we
have
\[
\frac{|\C_{3}(X)|}{\binom{n-2}{r-2}}\le(r-2)E(n,r)+\frac{D(n,r,3,3)+(r-3)D(n,r,4,3)}{\binom{n-2}{r-2}}.
\]
Combining the bounds obtained by considering $\C_{1}(X)$, $\C_{2}(X)$,
and $\C_{3}(X)$, and substituting for $D(n,r,s,t)$, we have
\begin{eqnarray}
\frac{|\B(X)|}{|\S(X)|} & \le & \frac{\frac{r-1}{n-1}}{1-\left(1-\frac{r-1}{n-1}\right)^{3}}\left(\frac{r}{a}+\frac{\binom{n-2}{r-2}+\binom{n-4}{r-3}+\binom{n-4}{r-2}}{\binom{n-2}{r-2}}+\frac{r-1}{a}\right.\nonumber \\
 &  & +(r-2)E(n,r)+\frac{\binom{n-3}{r-3}+3\binom{n-4}{r-3}+5\binom{n-5}{r-3}+\binom{n-6}{r-4}}{\binom{n-2}{r-2}}\label{eq:BXt3}\\
 &  & \left.+(r-3)\left(\frac{\binom{n-4}{r-4}+4\binom{n-5}{r-4}+9\binom{n-6}{r-4}+16\binom{n-7}{r-4}+\binom{n-7}{r-5}+4\binom{n-8}{r-5}}{\binom{n-2}{r-2}}\right)\right).\nonumber 
\end{eqnarray}
Using the same method demonstrated in Case $1$ of considering large
$r$ and approximating, then substituting $a=cr$, we may bound this
for large $r$ by $\frac{1}{3}\left(\frac{2}{c}+2\right)$. Thus for
any $\varepsilon>0$, for sufficiently large $r$, we have $X$ is
EKR for $n>(2+\varepsilon)r^{2}$. For example, using \eqref{eq:BXt3},
we have for $\varepsilon=.1$, we have $r\ge237$, $n>2.1r^{2}$ implies
$X$ is EKR. For $\varepsilon=1$, we have $n>3r^{2}$ implies $X$
is EKR for $r\ge14$.

\paragraph{Case 4: $t\ge4$}

Consider the family $\C_{1}=F(r,n,\{1,r+1\})$. We first count the
number of elements of $\C_{1}$ containing an element of $\{2,3,\ldots,t\}$.
All elements of $\C_{1}$ containing $2$ have first two elements
$\{1,2\}$, so there are $\binom{n-2}{r-2}$ such elements. We must
now account for sets containing $3$, but not $2$. Notice that all
such sets have first two elements $\{1,3\}$. There are $\binom{n-3}{r-2}$
of these. In general, for an set $A\subset\C_{1}$ to contain $i$,
but none of $2,\ldots,i-1$, the first two elements must be $\{1,i\}$.
There are $\binom{n-i}{r-2}$ such sets. This gives a total of
\[
\sum_{i=2}^{t}\binom{n-i}{r-2}
\]
elements of $\C_{1}$ containing one of $2,\ldots,t$. We must then
count the number of sets containing none of $2,\ldots,t$, but containing
$r+2$. As before, we consider sets with second element $i$, for
$t+1\le i\le r+1$, which gives $\sum_{i=t+1}^{r+1}\binom{n-i-1}{r-3}$.
Thus
\[
|\C_{1}(X)|=\sum_{i=2}^{t}\binom{n-i}{r-2}+\sum_{i=t+1}^{r+1}\binom{n-i-1}{r-3}.
\]
Notice that $\binom{n-i}{r-2}/\binom{n-2}{r-2}\le1$, and $\binom{n-i-1}{r-3}\le1/a$,
so
\[
\frac{|\C_{1}(X)|}{\binom{n-2}{r-2}}\le t-1+\frac{r-t+1}{a}.
\]

We now consider $\C_{2}=F(r,n,\{\{2,3,r+2\}\})$, and we count the
number of elements of $\C_{2}(X)$ that are not in $\C_{1}(X)$. As
before, we only need to count the elements with first two elements
$\{2,3\}$. Everything with first element $2$ is contained in $\C_{2}(X)$,
which adds an extra
\[
\sum_{i=4}^{r+2}\binom{n-i}{r-3}
\]
elements. Notice that $\binom{n-i}{r-3}\le1/a$ as seen in Case $1$,
hence $\frac{\sum_{i=4}^{r+2}\binom{n-i}{r-3}}{\binom{n-2}{r-2}}\le\frac{r-1}{a}$. 

We now consider $\C_{3}=\bigcup_{s=3}^{r}\A_{n,r,s}$. We first find
which $t$ will maximize $D(n,r,3,t)$ and $D(n,r,4,t)$. Notice for
$r$ such that $\min\{r+1,2s-1\}=2s-1$, we have
\[
D(n,r,s,t)=\sum_{i=s}^{2s-1}\left(\binom{i-1}{s-1}-\binom{i-t}{s-1}\right)\binom{n-i}{r-s}+\binom{i-t}{s-1}\binom{n-i-1}{r-s-1}.
\]
Since $\binom{n-i-1}{r-s-1}-\binom{n-i}{r-s}=-\binom{n-i-1}{r-s}$,
we have
\[
D(n,r,s,t)=\sum_{i=s}^{2s-1}\binom{i-1}{s-1}\binom{n-i}{r-s}-\binom{i-t}{s-1}\binom{n-i-1}{r-s}.
\]
This is increasing in $t$. We are only concerned with the case $s\in\{3,4\}$,
and in this case, $D(n,r,s,t)$ will stop increasing in $t$ once
$\binom{i-t}{s-1}=0$ for all $i$, and this happens when $2s-1-t<s-1$,
which gives $t>s\ge4$. Thus we may assume $t=5$. In this case we
have 
\[
\frac{|\C_{3}(X)|}{\binom{n-2}{r-2}}\le(r-2)E(n,r)+\frac{D(n,r,3,5)+(r-3)D(n,r,4,5)}{\binom{n-2}{r-2}}.
\]
Thus we have
\begin{eqnarray}
\frac{|\B(X)|}{|\S(X)|} & \le & \frac{\frac{r-1}{n-1}}{1-\left(1-\frac{r-1}{n-1}\right)^{t}}\left(t-1+\frac{r-t+1}{a}+\frac{r-1}{a}+(r-2)E(n,r)\right.\label{eq:BXt4}\\
 &  & \left.\frac{\binom{n-3}{r-3}+3\binom{n-4}{r-3}+6\binom{n-5}{r-3}+(r-3)\left(\binom{n-4}{r-4}+4\binom{n-5}{r-4}+10\binom{n-6}{r-4}+20\binom{n-7}{r-4}\right)}{\binom{n-2}{r-2}}\right).\nonumber 
\end{eqnarray}
Letting $r$ be large and substituting $a=cr$, this is approximately
\[
\frac{1}{t}\left(t-1+\frac{2}{c}-\frac{t}{cr}\right).
\]
Notice that $t$ may be as large as $r$, so we cannot ignore $t/cr$.
Thus for any $\varepsilon>0$, there exists an $r_{0}$ such that
for $r\ge r_{0}$, we have $X$ is EKR for $n>(2+\varepsilon)r^{2}$. 

We now compute some examples of how large $r_{0}$ must be. Notice
that $\binom{n-i}{r-3}\le\binom{n-3}{r-3}\le1/a$ for $i\ge3$ and
$\binom{n-j}{r-4}\le\binom{n-4}{r-4}\le1/a^{2}$ for $j\ge4$. Thus
the right hand side of \eqref{eq:BXt4} is less than
\[
BX(n,r,t)=\frac{\frac{r-1}{n-1}}{1-\left(1-\frac{r-1}{n-1}\right)^{t}}\left(t-1+\frac{r-t+1}{a}+\frac{r-1}{a}+(r-2)E(n,r)+\frac{10}{a}+\frac{35(r-3)}{a^{2}}\right).
\]
Notice $BX(n,r,t)$ is decreasing in $n$, since the first factor
is decreasing in $n$ by Lemma \ref{l.lastpart}, and $E(n,r)$ is
decreasing in $n$. We consider the case $n=tr^{2}$. Using a computer,
it is easy to check that $BX(tr^{2},r,t)$ is increasing in $t$.
Since $t\le r$, it is sufficient to check the case $t=r$. We then
check that $B(r^{3},r,r)$ is increasing in $r$ for $r\ge11$. If
may be checked by tedious repetitions of L'Hopital's rule that $\lim_{r\rightarrow\infty}B(r^{3},r,r)=1$,
so $B(r^{3},r,r)\le1$. Thus for $r\ge t\ge11$, we have $n\ge tr^{2}$
implies $X$ is EKR. Checking the remaining cases, we see 

For $t=4$ and $r\ge11$, we have $n\ge4r^{2}$ implies $X$ is EKR. 

For $t=5$ and $r\ge10$, we have $n\ge5r^{2}$ implies $X$ is EKR.

For $t\in\{6,7,8,9\}$ and $r\ge9$, we have $n\ge tr^{2}$ implies
$X$ is EKR.

For $t=5$ and $r\ge10$, we have $n\ge10r^{2}$ implies $X$ is EKR.\end{proof}
\section{A generating function for $|\A(X)|$\label{s.genfun}}

In this section, we introduce a generating function that can be used
to compute $|\A(X)|$ for any compressed family $\A$ and $X\subset[n]$.
This method of calculating $|\A(X)|$ is much faster in practice than
enumerating elements of $\A$ and checking if each intersects $X$.
In our experiments, the method given below seemed to be $40$ times
as fast.

Before beginning we note that for any set of generators $\emptyset\ne G\subset2^{[n]}$,
we may obtain a set of generators $\G'\subset\binom{[n]}{r}$ such
that $F(r,n,\G)=F(r,n,\G')$ and $|\G'|\le|\G|$. Indeed, if $G\in\G$,
and $|G|>r$, there is no $B\in\binom{[n]}{r}$ such that $B\prec G$.
If $|G|\le r$, then $G'=G\cup[n-(r-|G|)+1,n]\in\binom{[n]}{r}$ satisfies
$A\le G'$ if and only if $A\prec G$. Thus we may assume any set
of generators is a family. 

For a family $\A$, consider the function $f_{\A}$ defined by 
\[
f_{\A}(x_{1},\ldots,x_{n})=\sum_{B\in\A}\prod_{i\in B}x_{i}.
\]
Notice that $f_{\A}(1,\ldots,1)=|\A|$. Let 
\[
\delta_{i,X}=\begin{cases}
1 & i\notin X\\
0 & i\in X
\end{cases}.
\]
We define $f_{\A}(x_{1},\ldots x_{n})|_{X=0}=f_{\A}(\delta_{1,X}x_{1},\delta_{2,X}x_{2},\ldots,\delta_{n,X}x_{n})$.
Notice that 
\[
f_{\A}(x_{1},\ldots,x_{n})-f_{\A}(x_{1},\ldots x_{n})|_{X=0}=f_{\A(X)}(x_{1},\ldots,x_{n}).
\]
Recall as discussed in Section \ref{s.intcond}, we may assume any
set of generators is a family, so throughout this section, we assume
all generators have $r$ elements.

For $\A$ of the form $\A=F(r,n,\{\{a_{1},\ldots,a_{r}\}\})$ we denote
$f_{\A}$ by $f_{a_{1},\ldots,a_{r}}$. Proposition \ref{p.genfunc}
gives a recursive method for computing $f_{a_{1},\ldots,a_{n}}$.

\begin{prop}\label{p.genfunc} For a compressed family $\A=F(r,n,\{\{a_{1},\ldots,a_{r}\}\})$,
we have
\[
f_{a_{1},\ldots,a_{r}}(x_{1},\ldots,x_{n})=\sum_{i=r}^{a_{r}}x_{i}f_{\{\min\{a_{j},i+j-r\}\}_{j=1}^{r-1}}(x_{1},\ldots,x_{n-1}),
\]
where $\{\min\{a_{j},i+j-r\}\}_{j=1}^{r-1}$ denotes the $r-1$ element
set where the $j$-th element is $\min\{a_{j},i+j-r\}$.\end{prop}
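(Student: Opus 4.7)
The plan is to partition the elements of $\A = F(r,n,\{\{a_1,\ldots,a_r\}\})$ according to their largest element $b_r$, and show that each slice of the partition is itself the term of a generating function of the stated form.

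First I would justify the range of the outer sum. For any $B = \{b_1,\ldots,b_r\} \in \A$ we have $b_r \ge r$ (since $B$ consists of $r$ distinct positive integers) and $b_r \le a_r$ (by the compression order $B \le \{a_1,\ldots,a_r\}$). So writing $i := b_r$, the sum decomposes as
\[
f_{a_1,\ldots,a_r}(x_1,\ldots,x_n) = \sum_{i=r}^{a_r} x_i \sum_{\{b_1,\ldots,b_{r-1}\}} \prod_{j=1}^{r-1} x_{b_j},
\]
where the inner sum runs over the valid choices of the remaining $r-1$ elements once $b_r = i$ is fixed.

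Next I would identify exactly which $(r-1)$-tuples $b_1 < \cdots < b_{r-1}$ arise. Two conditions apply: the compression-order condition gives $b_j \le a_j$, and the requirement $b_j < b_{j+1} < \cdots < b_r = i$ forces $b_j \le i - (r-j) = i + j - r$. Conversely any strictly increasing sequence satisfying both bounds extends to a valid $B \in \A$ by appending $i$. So the inner sum is exactly $f_{\G_i}(x_1,\ldots,x_{n-1})$, where $\G_i = \{\min(a_j, i+j-r)\}_{j=1}^{r-1}$ and the variables beyond $x_{n-1}$ are irrelevant because every admissible $b_j$ satisfies $b_j \le i - 1 \le n-1$.

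The small technical step that needs checking is that $\G_i$ is actually an $(r-1)$-element subset of $[n-1]$, so that the notation $f_{\G_i}$ makes sense as a single-generator compressed family. For the strict monotonicity, since $a_{j+1} \ge a_j + 1$ and $(i+(j+1)-r) = (i+j-r)+1$, taking the minimum preserves strict increase, giving $\min(a_{j+1},i+j+1-r) \ge \min(a_j,i+j-r)+1$. Positivity of the smallest element follows from $i \ge r$, which yields $i + 1 - r \ge 1$. Once this is verified, combining with the identification of the inner sum gives the claimed recursion.
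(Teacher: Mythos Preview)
Your proposal is correct and follows essentially the same approach as the paper's proof: partition $\A$ by the value $i$ of the largest element, then observe that the remaining $r-1$ elements are exactly those dominated componentwise by $\{\min\{a_j,i+j-r\}\}_{j=1}^{r-1}$. Your write-up is in fact slightly more careful than the paper's, since you explicitly verify that the new generator is a genuine strictly increasing $(r-1)$-set, a point the paper leaves implicit.
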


\begin{proof}

Consider $B\in\A=F(r,n,\{\{a_{1},\ldots,a_{r}\}\})$ with largest
element $i$. Notice we must have $r\le i\le a_{r}$, and $b_{j}\le a_{j}$
because $B\le\{a_{1},\ldots,a_{r}\}$. Since $b_{j}\le b_{j+1}-1$,
and the largest element of $B$ is $i$, the $j$-th element is at
most $i+j-r$. Combining these observations, we have $\{b_{1},\ldots,b_{r-1}\}\le\{\min\{a_{j},i+j-r\}\}_{j=1}^{r-1}$.
Conversely, every $B$ such that $b_{r}=i$ and $\{b_{1},\ldots,b_{r-1}\}\le\min\{a_{j},i+j-r\}$
is in $\A$ since $\A$ is compressed, which gives the desired expression.\end{proof}

\begin{rem} Observe that $B\le G=\{g_{1},\ldots,g_{r}\}$ and $B\le H=\{h_{1},\ldots,h_{r}\}$
if and only if $B\le\{\min\{g_{i},h_{i}\}\}_{i=1}^{r}$. Thus by the
preceding proposition, we may obtain $f_{\A}$ with $\A=F(r,n,\mathcal{G})$
for any set of generators $\G$ using the principle of inclusion-exclusion.
For example with $\G=\{G,H\}$,
\[
f_{F(r,n,\{G,H\})}=f_{F(r,n,\{G\})}+f_{F(r,n,\{H\})}-f_{F(r,n,\{\{\min\{g_{i},h_{i}\}\}_{i=1}^{r}\})}.
\]
\end{rem}

\section{Concluding Remarks}

Theorems \ref{t.multgen} and \ref{t.onegen} each serve an important
purpose. Previously, there was no known relation between $n$ and
$r$ that guarantees that one of the eventually EKR sets classified
by Barber is EKR. Theorem \ref{t.multgen} gives such a bound. However,
it is unlikely that the bound given in Theorem \ref{t.multgen} is
tight, and Theorem \ref{t.onegen} gives a suggestion for the optimal
bound.

Though the proofs of Theorems \ref{t.multgen} and \ref{t.onegen}
are long, the underlying ideas, which are outlined at the beginning
of Section \ref{s.onegenprelims} and at the beginning of the proof
of Theorem \ref{t.multgen}, are simple. However, it seems difficult
to use these methods to give a proof of Conjecture \ref{c.phicon},
since giving explicit formulas for $|F(r,n,\G)(X)|$ when $\G$ contains
many generators can be quite complicated. Furthermore, the methods
of this paper rely heavily on computer computations, and we hope that
a different proof may be given. An answer to the following question,
a slight variant of a question originally asked in \cite{barber}
may provide such a proof of Conjecture \ref{c.phicon}. 

\begin{quest}

Given $X$, is there a short list of families, one of which maximizes
$|\A(X)|$?

\end{quest}

A natural choice for such a list is $\mathfrak{A}=\{\A_{n,r,s}\}_{s=1}^{r}$,
and this list would be especially useful because of Theorem \ref{t.onegen}.
However, this does not hold in general, for example when $X=\{4,r+2\}$,
$r=5$, and $n=11$, we have 
\begin{eqnarray*}
|\A_{n,r,1}(X)|=|\S(X)| & = & 140,\\
|\A_{n,r,2}(X)| & = & 121,\\
|\A_{n,r,3}(X)| & = & 136,\\
|\A_{n,r,4}(X)| & = & 140,\\
|\A_{n,r,5}(X)| & = & 105,
\end{eqnarray*}
but 
\[
|F(r,n,\{\{2,3,4\},\{3,4,6,7\}\})(X)|=142.
\]
One may still hope that $\mathfrak{A}$ provides such a list for certain
values of $n$ and $r$. 

Another possible direction for research concerns $t$-intersecting
families. We say a family $\A$ is $t$-intersecting if for any $A,B\in\A$,
we have $|A\cap B|\ge t$. We ask,

\begin{quest}

Can our results be generalized to $t$-intersecting families?

\end{quest}

\noindent For $t$-intersecting families, \cite{barber} suggests
considering
\[
\A(s,X)=\{A\in\A:|A\cap X|\ge s\}
\]
and asking for which $X$ do we have $|\A(s,X)|\le|\S(s,X)|$ for
all compressed and $t$-intersecting $\A$. We suspect it is possible
to use similar techniques to those used in \cite{barber} and this
paper to obtain partial results in this more general case.

\section*{Acknowledgments}

This research was done at the University of Minnesota Duluth math
REU, which is run by Joe Gallian, to whom the author is thankful.
The REU was supported by the National Science Foundation and the Department
of Defense (grant number 1062709) and the National Security Agency
(grant number H98230-11-1-0224). The author would like to thank program
advisors Eric Riedl, Davie Rolnick, and Adam Hesterberg for many helpful
discussions, as well as for their comments on this paper. The author
would also like to thank Duluth visitor Jonathan Wang for comments
on this paper.

\end{document}